\newtheorem{theorem}{Theorem}[section]
\newtheorem{lemma}[theorem]{Lemma}
\newtheorem{corollary}[theorem]{Corollary}
\newtheorem{question}[theorem]{Question}
\newcommand{\R}{{\mathbb R}}
\newcommand{\eps}{\varepsilon}
\renewcommand{\le}         {\leqslant}
\renewcommand{\ge}         {\geqslant}
\renewcommand{\geq}         {\geqslant}
\newcommand{\LLP}{{L_+}}
\newcommand{\LLM}{{L_-}}
\newcommand{\EEE}{{{\Lambda}}}
\newcommand{\EEEP}{{{\EEE}_+}}
\newcommand{\EEEM}{{{\EEE}_-}}
\renewcommand{\mathcal}{\mathscr}
\begin{document}

\title[$1$D symmetry for semilinear PDEs]{$1$D symmetry for
semilinear PDEs\\
from the limit interface of the solution}

\author{Alberto Farina}
\address{ {LAMFA -- CNRS UMR 6140}
\\ {Universit\'e de Picardie Jules Verne}
\\ {Facult\'e des Sciences}
\\ {33, rue Saint-Leu}
\\ {80039 Amiens CEDEX 1, France}}
\email{alberto.farina@u-picardie.fr}

\author{Enrico Valdinoci}
\address{ {Weierstra{\ss} Institut f\"ur Angewandte Analysis und
Stochastik} \\
{Mohrenstrasse, 39} \\
{10117 Berlin, Germany} }
\email{enrico.valdinoci@wias-berlin.de}

\thanks{It is a pleasure to thank
Yoshihiro Tonegawa for a very instructive
discussion held in Banff. This work has been
supported by the ERC grant 277749 ``EPSILON Elliptic
Pde's and Symmetry of Interfaces and Layers for Odd Nonlinearities''.}

\begin{abstract}
We study bounded, monotone solutions of~$\Delta u=W'(u)$
in the whole of~$\R^n$,
where~$W$ is a double-well potential. We prove that
under suitable assumptions on the limit interface
and on the energy growth, $u$ is $1$D.

In particular, differently from the previous literature, the solution is not
assumed to have minimal properties and the cases studied lie outside
the range of $\Gamma$-convergence methods.

We think that this approach could be fruitful in concrete situations,
where one can observe the phase separation at a large scale and whishes
to deduce the values of the state parameter in the vicinity of the interface.

As a simple example of the results obtained with this point of view,
we mention that monotone solutions with energy bounds,
whose limit interface does not contain a vertical line through the origin,
are $1$D, at least up to dimension~$4$.
\end{abstract}

\maketitle
\tableofcontents

\section*{Notation}

We take~$n\ge2$. A point~$x\in\R^n$ will be often written
as~$x=(x',x_n)\in\R^{n-1}\times\R$.

For any~$1\le i\le n$, the partial derivatives
with respect to~$x_i$ will be denoted by
$$\partial_i=\partial_{x_i}=\frac{\partial}{\partial x_i}.$$
Also, given an ambient domain~$\Omega\subseteq\R^n$
and a set~$F\subseteq\Omega$, we denote 
by~$\chi_F$ its characteristic function, i.e.
$$ \chi_F(x):=\left\{\begin{matrix}
1 & {\mbox{ if $x\in F$,}}
\\ 0& {\mbox{ if $x\in {\mathcal{C}}F$,}}
\end{matrix}\right.$$
where~${\mathcal{C}}F:=\Omega\setminus F$ is the complement of~$F$
in its ambient space.

We denote by~$W$ the classical ``double-well potential''~$W(r)=(1-r^2)^2$
(more general type of double-well potentials may be treated
in the same way), and we will study solutions
\begin{equation}\label{PDE}
\begin{split}&u\in C^2(\R^n,[-1,1]) {\mbox{ of }} \\&\Delta u(x)=W'(u(x))
{\mbox{ for any }}
x\in\R^n\end{split}\end{equation}
under the monotonicity condition
\begin{equation}\label{mono}
{\frac{\partial u}{\partial x_n}}(x)\,>\,0
{\mbox{ for any }}
x\in\R^n.
\end{equation}
After~\cite{DG}, condition~\eqref{mono} has become
classical in the study of semilinear equations. {F}rom the
variational point of view, it implies that
\begin{equation}\label{STA}
{\mbox{$u$ is a stable solution,}} \end{equation}
i.e. the second variation of the energy
is nonnegative, see e.g. Corollary 4.3 in \cite{AC}.

For such~$u$ and any~$\eps>0$, we define the rescaled solution
$$ u_\eps(x):=u(x/\eps).$$
We observe that, by the Maximum Principle
and~\eqref{mono}, we have that~$|u(x)|<1$ for every~$x\in\R^n$ and so,
in particular,
\begin{equation}\label{EMU}
u_\eps(0)=u(0)\in (-1,1).
\end{equation}
Given a bounded open set~$\Omega\subset\R^n$, we also
consider the energy functional associated to~\eqref{PDE},
namely
$$ {\mathcal{E}}(u,\Omega):=\int_\Omega\frac{|\nabla u(x)|^2}{2}
+W(u(x))\,dx.$$
We say that~$u$ is a (local) minimizer if, for any
bounded open set~$\Omega$ and any~$\varphi\in C^\infty_0(\Omega)$,
we have that
$$ {\mathcal{E}}(u,\Omega)\le {\mathcal{E}}(u+\varphi,\Omega).$$
Similarly, one says that~$u$ is quasiminimal if, for some~$Q\ge1$, one has that
$$ {\mathcal{E}}(u,\Omega)\le Q\,{\mathcal{E}}(u+\varphi,\Omega)$$
for any bounded open set~$\Omega$ and any~$\varphi\in C^\infty_0(\Omega)$.

\section{Introduction}

The study of the PDE in \eqref{PDE}
under the monotonicity assumption \eqref{mono}
is a classical topic in semilinear elliptic equations 
and it goes back, at least, to the study
of the Ginzburg-Landau-Allen-Cahn
phase segregation model, in connection with
the theory of hypersurfaces with minimal perimeter.
In particular,
the following striking problem
was posed in~\cite{DG}:

\begin{question}\label{GQ}
Let~$u$ be a solution of~\eqref{PDE}
satisfying~\eqref{mono}.
Is it true that~$u$ is~$1$D, i.e. that
all the level sets of~$u$ are hyperplanes, at least if $n\le8$?
\end{question}

We refer to~\cite{GG,AC} and also~\cite{BCN,AAC}
for the proof that Question~\ref{GQ}
has a positive answer in dimension~$2\le n\le 3$.
See also~\cite{S}, where it is shown that
Question~\ref{GQ}
also has a positive answer in dimension~$4\le n\le8$
provided that
\begin{equation}\label{LI}
\lim_{x_n\rightarrow-\infty} u(x',x_n)=-1
\quad {\mbox{ and }} \quad
\lim_{x_n\rightarrow+\infty} u(x',x_n)=1.\end{equation}
When~$n\ge9$, an example of a solution~$u$
satisfying~\eqref{PDE}, \eqref{mono} and \eqref{LI} that is not~$1$D
was constructed in~\cite{delpinokowei}, showing that the dimensional
constraint in Question~\ref{GQ} cannot be removed.

See also \cite{Tran} for some symmetry results that hold under conditions
at infinity which are weaker than~\eqref{LI}.
In particular, it is shown in~\cite{Tran} that condition~\eqref{LI}
can be relaxed to suitable symmetry assumptions on the asymptotic
profiles
\begin{equation}\label{profiles}\begin{split}
&\overline u(x'):=\lim_{x_n\rightarrow-\infty} u(x',x_n), 
\\ 
&\underline u(x'):=\lim_{x_n\rightarrow+\infty} u(x',x_n)
..\end{split}\end{equation}
More precisely, in~\cite{Tran}
it is proved that solutions of~\eqref{PDE}
satisfying~\eqref{mono} are $1$D if:
\begin{itemize}
\item $2\le n\le 4$ and at least one between~$\overline u$
and~$\underline u$ are $2$D,
\item $2\le n\le 8$ and both~$\overline u$
and~$\underline u$ are $2$D.
\end{itemize}
It is also proved in~\cite{Tran} that
solutions of~\eqref{PDE} that satisfy~\eqref{mono} 
are $1$D if~$2\le n\le 8$, provided that
at least one level set is a complete graph.

Other symmetry results are obtained in~\cite{Tran}
for quasilinear equations and for quasiminimal solutions.

We also refer to~\cite{LE} for further motivation and
a review on Question~\ref{GQ}, and to~\cite{ZAMP}
for its connection with an important problem posed
by~\cite{Bangert}.

\medskip

In the light of the above mentioned results, we
have that Question~\ref{GQ} is still open in dimension~$4\le n\le8$, and our
paper would like to be a further step towards this
direction.\medskip

In the literature, most of the research related to
Question~\ref{GQ} heavily relies on the analysis of minimizers
of the energy functional. 
This approach was also inspired by the classical $\Gamma$-convergence
results (see e.g.~\cite{Modicark}), 
which established an important link
between the level sets of the solutions and the study of hypersurfaces
with minimal perimeter.

On the other hand, from the point of view
of pure mathematics, it is interesting to consider the case
of solutions that do not necessarily
have minimizing properties, 
or whose minimizing properties are not completely known a-priori.
As a matter of fact, these solutions also appear in concrete
situations, since there is numerical evidence that
some solutions show unstable patterns before settling down
to more stable configurations (see e.g. \cite{MR950604}).

Therefore, the scope of this paper is to investigate symmetry results
without assuming any minimality condition on the solution,
but only monotonicity assumptions, energy bounds and some
geometric information on the limit interface.
\medskip

For this, we will derive rigidity and symmetry results
from either the behavior of the limit level set
or the one of the limit varifold, 
using some results in~\cite{HT}
in order to introduce and describe the limit interface
without minimizing assumptions (notice that in this case
the $\Gamma$-convergence theory cannot be applied).
\medskip

More precisely, the cornerstone to describe
the limit interface lies in the assumption that
$u$ satisfies the following energy bound: for any~$R>1$,
\begin{equation}\label{e bound}
{\mathcal{E}}(u,B_R)\le CR^{n-1},
\end{equation}
for some~$C>0$ independent of $R$.
Such energy bound is a classical assumption in the
setting of semilinear elliptic equations
(see e.g.~\cite{HT})
and it is satisfied by minimizers and
monotone solutions with some conditions at infinity
(see e.g. Theorem 5.2 in \cite{AC}). As a matter of fact, it
is valid for quasiminimal solutions too (see Lemma~10 in~\cite{FV})
and it is
also implied by the weaker condition on the potential energy
$$ \int_{B_R} W(u(x))\,dx\le CR^{n-1},$$
see~\cite{Mod}.
By scaling, the energy bound in~\eqref{e bound}
implies that the rescaled energy with density
$$ \frac{\varepsilon |\nabla u_\varepsilon(x)|^2}{2}
+\frac{W(u_\varepsilon(x))}{\varepsilon}\,dx$$
is locally bounded uniformly in~$\varepsilon>0$. Under this
condition, and recalling~\eqref{STA},
the results of~\cite{HT} come into play.
First of all,
we fix a domain (for convenience a cylinder)
and we look at the asymptotics of the rescaled level sets
in such domain. That is, given~$d>0$ and~$h>0$ we denote by~${\rm C}(d,h)$
the (open)
cylinder of base radius~$d$ and height~$2h$, i.e. we set
\begin{equation}\label{CYL}
{\rm C}(d,h):= \{ x\in\R^n {\mbox{ s.t. }} |x'|<d
{\mbox{ and }} |x_n|<h\}.\end{equation}
Then we fix (once and for all in this paper) the quantities~$d_o$, $h_o>0$ and
(see e.g. page~52 in~\cite{HT}) we have that,
up to a subsequence, $u_\varepsilon$ converges a.e. in~${\rm C}(d_o,h_o)$
to $\pm1$, i.e.
we can define~$\LLM$ as the set of points~$p\in {\rm C}(d_o,h_o)$
such that~$u_\eps(p)\to-1$, and then~$u_\eps$ 
approaches a.e. the step function~$\chi_{{\mathcal{C}}\LLM}-\chi_\LLM$.
\medskip

We observe that~$\chi_{{\mathcal{C}}\LLM}-\chi_\LLM$ is a measurable
function, since it is obtained by limit of measurable (and, in fact,
continuous) functions, and therefore~$\LLM$ is a measurable set.
Nevertheless, it is worth to point out that we
do not identify~$\LLM$ with the set of
its points of Lebesgue density~$1$, since we have defined it
directly via the pointwise convergence of the rescaled solution~$u_\eps$.
\medskip

It is also interesting to notice that
\begin{equation}\label{ECE0}
0\in {\mathcal{C}}\LLM,
\end{equation}
thanks to~\eqref{EMU}.

In a symmetric way, we also define~$\LLP$ as
the set of points~$p\in {\rm C}(d_o,h_o)$
such that~$u_\eps(p)\to 1$.
\medskip

Having clearly stated the notion of the limit level set,
now we introduce a more general object
which encodes further asymptotic and geometric properties of
the solution. Namely
(see Theorem~1 and Proposition 4.2. of~\cite{HT})
we have that
there exists a varifold~$V$ (which we will call ``limit varifold''
and whose geometric support will be denoted by~$V$
as well) such that (up to subsequences):
\begin{itemize}
\item $u_\varepsilon\to\pm1$ uniformly on each connected compact subset of 
${\rm C}(d_o,h_o)\setminus V$;
\item for any $ {\tilde U} \Subset  {\rm C}(d_o,h_o)$ and for any~$c\in(-1,1)$ the set~$\{|u|\le c\} \cap {\tilde U} $  converges uniformly to~$ V \cap {\tilde U}$, 
i.e., if we set
\begin{equation} \label{6bis}
V_\delta:= \bigcup_{p\in V} B_\delta(p),\end{equation}
then for any~$\delta>0$ there exists~$\varepsilon_o>0$
such that for any~$\varepsilon\in(0,\varepsilon_o)$ we have that
\begin{equation}\label{LU0} 
\{|u_\varepsilon|\le c\}\cap {\tilde U} \subseteq V_\delta \cap {\tilde U}.\end{equation}
\end{itemize}
\medskip

We remark that the limit set~$\LLM$ and the limit varifold~$V$
that we have discussed here have a simple
geometric explanation in terms of the physical interpretation
of the Allen-Cahn equation: namely, 
they represent the limit
interface that separate two coexisting phase states.
\medskip

Using the setting mentioned above,
the first result that we present deals with the rigidity properties
of the solutions which are inherited from the structure of the limit level set~$\LLM$:

\begin{theorem}\label{T1.0}
Let~$u$ be a solution of~\eqref{PDE}
satisfying~\eqref{mono} and~\eqref{e bound}.
Suppose that there exist~$d\in(0,d_o]$, and~$\overline K$, $K\in(0,h_o]$, with~$\overline K>K$,
such that
\begin{equation}\label{ECE}
\LLM\cap {\rm C}(d,\overline K)
\ne\varnothing, \qquad \LLP\cap {\rm C}(d,\overline K)
\ne\varnothing,
\end{equation}
and
\begin{equation}\label{TRAP}
\Big((\partial \LLM)\cup(\partial \LLP)\Big)\cap {\rm C}(d,\overline K)
\subseteq \{|x_n|< K\}.
\end{equation}
Then:
\begin{itemize}
\item[{(i)}] The limits in~\eqref{LI} hold true,
\item[{(ii)}] The solution~$u$ is a local
minimizer,
\item[{(iii)}] If $2\le n\le 8$, then~$u$ is~$1$D.
\end{itemize}
\end{theorem}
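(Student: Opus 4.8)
The plan is to obtain the pointwise limits \eqref{LI} of part~(i) directly from the monotone structure of the phases $\LLP,\LLM$ together with the trapping hypothesis \eqref{TRAP}, and then to deduce (ii) and (iii) from (i) by the foliation/calibration method and the cited symmetry theorems. First I would record the monotonicity of the phases: since \eqref{mono} gives $u_\eps(x',x_n'')\ge u_\eps(x',x_n')$ whenever $x_n''>x_n'$, and $|u_\eps|\le1$, the set $\LLP$ is upward closed and $\LLM$ downward closed along each vertical fibre $\{x'=\text{const}\}$. Using that $u_\eps$ converges a.e.\ to $\chi_{{\mathcal{C}}\LLM}-\chi_\LLM$, almost every point of ${\rm C}(d,\overline K)$ lies in $\LLP\cup\LLM$. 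Consider then the open connected slab $T:=\{|x'|<d,\ K<x_n<\overline K\}$: by \eqref{TRAP} it meets no boundary point of $\LLP$ or of $\LLM$, so $T$ is contained in a single phase; if that phase were $\LLM$, downward closedness would drag the whole of ${\rm C}(d,\overline K)$ into $\LLM$, contradicting $\LLP\cap{\rm C}(d,\overline K)\ne\varnothing$ from \eqref{ECE}. Hence $T\subseteq\LLP$ and, symmetrically, $\{|x'|<d,\ -\overline K<x_n<-K\}\subseteq\LLM$.

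With the phases pinned down on the vertical axis, I would evaluate the rescaling there: for $K<x_n<\overline K$ we have $u_\eps(0,x_n)=u(0,x_n/\eps)\to1$ while $x_n/\eps\to+\infty$, so the monotone limit $\underline u(0)=\lim_{s\to+\infty}u(0,s)$ equals $1$, and likewise $\overline u(0)=-1$. Since $\underline u$ solves $\Delta\underline u=W'(\underline u)$ on $\R^{n-1}$ with $\underline u\le1$ and $\underline u(0)=1$, the function $v:=1-\underline u\ge0$ satisfies $\Delta v=c(x)\,v$ with $c$ bounded; applying the strong maximum principle to $v$ as a supersolution of $\Delta-C_0$ (with $C_0\ge\sup c$) at its interior minimum $v(0)=0$ forces $v\equiv0$, i.e.\ $\underline u\equiv1$, and symmetrically $\overline u\equiv-1$. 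This is exactly \eqref{LI}, which proves~(i).

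Finally, once \eqref{LI} holds the ordered translates $(x',x_n)\mapsto u(x',x_n+t)$, $t\in\R$, foliate $\R^n$ by solutions running monotonically from $-1$ to $+1$, and the calibration attached to this foliation certifies that $u$ is a local minimizer, giving~(ii) (cf.\ \cite{AAC}). Part~(iii) is then immediate: for $2\le n\le3$ the results of \cite{GG,AC} give $1$D symmetry with no extra hypothesis, while for $4\le n\le8$ the limits \eqref{LI} established in~(i) place us exactly in the scope of Savin's theorem \cite{S}. I expect the main obstacle to be the clean phase identification in the trapped slab, that is, reconciling the pointwise definition of $\LLP,\LLM$ with the hypothesis \eqref{TRAP} on their topological boundaries and with the a.e.\ step-function convergence, together with the upgrade from the single value $\underline u(0)=1$ to the global identity $\underline u\equiv1$ via the maximum principle.
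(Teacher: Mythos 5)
Your proposal is correct and takes essentially the same route as the paper: the paper reduces Theorem \ref{T1.0} to a one‑sided statement (Theorem \ref{T04}) whose proof is exactly your connectedness argument — the slab $Q_-$ cannot meet $\partial\LLM$ by \eqref{TRAP}, hence by connectedness and the vertical monotonicity from \eqref{mono} it lies entirely in $\LLM$ (else \eqref{ECE} is violated), after which the maximum principle upgrades $\underline u(0)=\pm1$ to \eqref{LI}, and (ii)–(iii) follow from the calibration/minimality result of \cite{AAC} and Savin's theorem \cite{S}, packaged in the paper as Theorem~1.3 of \cite{Tran}. The only cosmetic differences are that you treat both slabs at once and rule out the ``neither phase'' case via the a.e.\ convergence, where the paper instead invokes \eqref{ECE00} directly.
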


We observe that condition~\eqref{ECE}
is quite natural, since it is consistent with the physical
interpretation of the model describing the coexistence
of two phases separated by an interface. Moreover, it
is always satisfied if the solution~$u$ is minimal
or, more generally, quasiminimal: indeed, in this case,
it follows from~\eqref{EMU}
and Corollary~13 in~\cite{FV} that~$\min\{|B_\delta\cap \LLM|,\,|B_\delta\cap\LLP|\}\ge c\delta^n$, for a suitable~$c>0$.\medskip

We also point out that
condition~\eqref{ECE} is also directly implied by~\eqref{mono}
and suitable geometric constraints on the zero level sets of the solution
(for instance, by the condition that~$\{u=0\}\subseteq \{|x_n|\le\kappa\}$
for some~$\kappa\ge0$).
\medskip

The geometric restriction on the limit level set~$\LLM$ in~\eqref{TRAP}
is depicted in Figure~1
(the picture also remarks that, in principle, condition \eqref{TRAP}
allows~$\partial \LLM$ to have vertical parts outside the origin, though,
of course, some further restrictions on~$\LLM$ are imposed by~\eqref{mono}).
\bigskip

\begin{center} 
\includegraphics[height=2in]{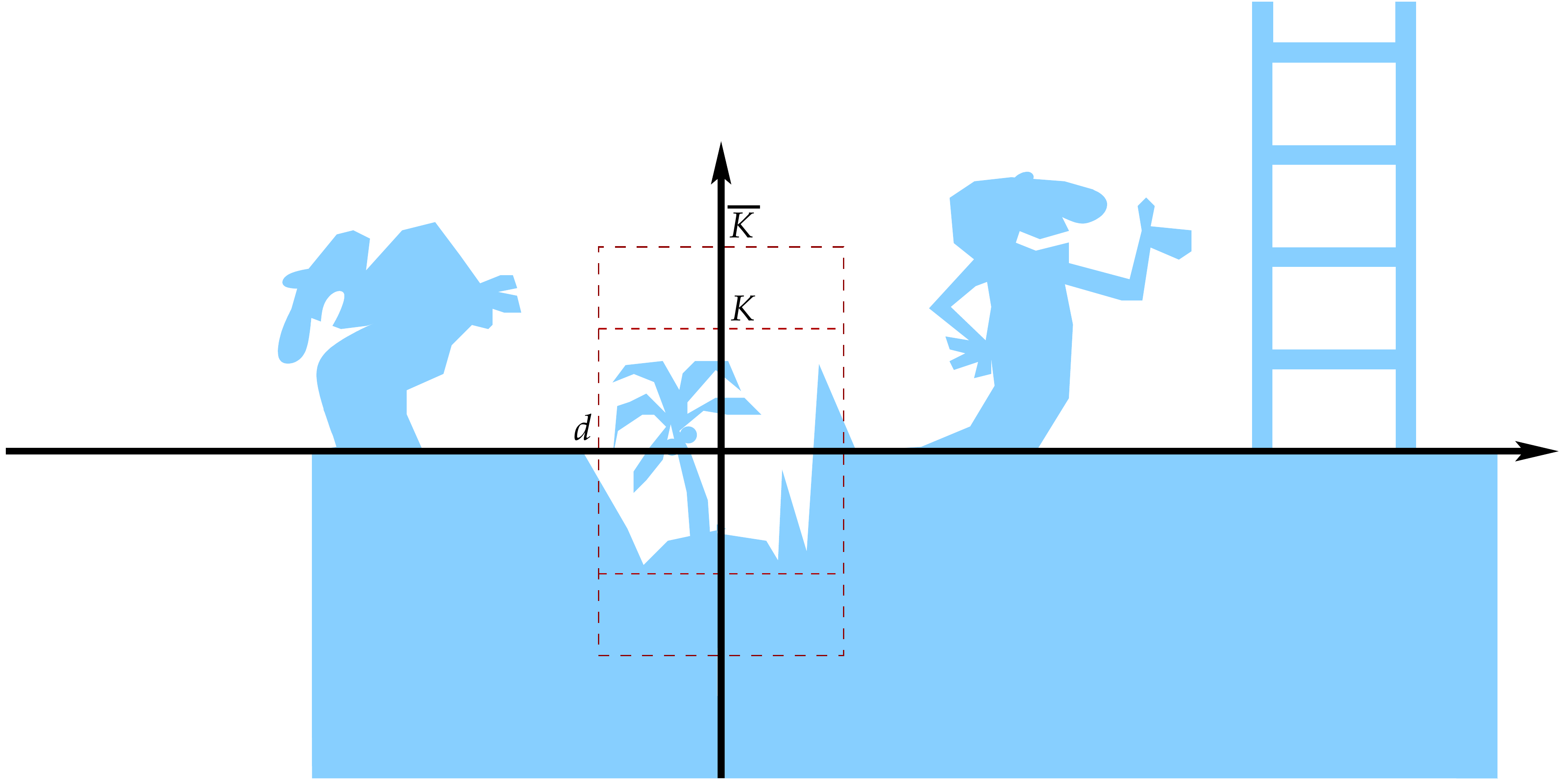}
\end{center}
\nopagebreak\centerline{\footnotesize\em
Figure~1. The trapping condition on the set~$\LLM$ (which is the colored
region) given by~\eqref{TRAP}.}
\bigskip

As a matter of fact, we will derive Theorem~\ref{T1.0}
from the following more general result, in which
the asymptotic profiles of the solutions are determined only by
a one-side constraint on the limit level set:

\begin{theorem}\label{T04}
Let~$u$ be a solution of~\eqref{PDE}
satisfying~\eqref{mono} and~\eqref{e bound}.
Suppose that there exist~$d\in(0,d_o]$, 
and~$\overline K$, $K\in(0,h_o]$, with~$\overline K>K$,
such that
\begin{equation}\label{ECE00}
\LLM\cap {\rm C}(d,\overline K)
\ne\varnothing
\end{equation}
and
\begin{equation}\label{TRAP00}
(\partial \LLM)\cap {\rm C}(d,\overline K)
\subseteq \{x_n> -K\}.
\end{equation}
Then
\begin{equation}\label{TRAP01}
\lim_{x_n\to-\infty}u(x',x_n)=-1.\end{equation}
Moreover, if~$2\le n\le 4$, then~$u$ is~$1$D.
\medskip

Similarly, if
\begin{equation}\label{ECE00-bis}
\LLP\cap {\rm C}(d,\overline K)
\ne\varnothing
\end{equation}
and
\begin{equation}\label{TRAP00-bis}
(\partial \LLP)\cap {\rm C}(d,\overline K)
\subseteq \{x_n< K\},
\end{equation}
then
$$ \lim_{x_n\to+\infty}u(x',x_n)=1.$$
Moreover, if~$2\le n\le 4$, then~$u$ is~$1$D.
\end{theorem}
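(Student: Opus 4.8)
The plan is to use the one–sided trapping hypothesis to force the rescaled solutions~$u_\eps$ to converge to~$-1$ locally uniformly in the lower part of the cylinder, to unscale this information at moving base points so as to obtain~\eqref{TRAP01}, and finally to deduce the~$1$D property by reducing to the symmetry result of~\cite{Tran} recalled in the Introduction.

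First I would record that, by the monotonicity~\eqref{mono}, each~$u_\eps$ is increasing in~$x_n$, so the set~$\LLM$ is downward closed along vertical segments: if~$(x',x_n)\in\LLM$ then~$(x',s)\in\LLM$ for every~$-\overline K<s<x_n$ with~$|x'|<d$. Consider then the open box~$R:=\{|x'|<d,\ -\overline K<x_n<-K\}$, which lies in the region~$\{x_n<-K\}$. By~\eqref{TRAP00} we have~$R\cap\partial\LLM=\varnothing$, so, being connected,~$R$ is contained either in the interior of~$\LLM$ or in the interior of its complement. Starting from any~$p\in\LLM\cap {\rm C}(d,\overline K)$ provided by~\eqref{ECE00} and sliding down the vertical segment through~$p$, the downward closedness produces a point of~$\LLM$ inside~$R$; this excludes the second alternative and yields~$R\subseteq\LLM$.

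The next, and main, step is to upgrade the pointwise inclusion~$R\subseteq\LLM$ to the \emph{uniform} convergence~$u_\eps\to-1$ on compact subsets of~$R$, and here I would appeal to the structure of the limit varifold of~\cite{HT}. Since on~$R$ the functions~$u_\eps$ converge to~$-1$, the rescaled energy density degenerates there, so~$R$ should be disjoint from the support~$V$ of the limit varifold; then~$R$ is a connected open subset of~${\rm C}(d_o,h_o)\setminus V$ contained in the minus phase, and the first of the two properties of~$V$ recalled before Theorem~\ref{T1.0} gives~$u_\eps\to-1$ uniformly on every compact~$R'\Subset R$. I expect the identification~$R\cap V=\varnothing$ to be the delicate point, since~$\LLM$ has been defined through pointwise convergence rather than through energy or density; one has to argue carefully that a point of the pure minus phase cannot sit on the limit interface, using~\eqref{LU0} to push the level sets~$\{|u_\eps|\le c\}$ away from such a point.

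Granting uniform convergence on a compact~$R'$ containing a neighbourhood of some~$(0,-K')$ with~$-\overline K<-K'<-K$, the unscaling is routine. For a fixed~$x'_0$ the moving point~$(\eps x'_0,-K')$ lies in~$R'$ once~$\eps$ is small, whence~$u(x'_0,-K'/\eps)=u_\eps(\eps x'_0,-K')\to-1$; since~$-K'/\eps\to-\infty$ and~$u(x'_0,\cdot)$ is increasing by~\eqref{mono}, the monotone limit~$\lim_{x_n\to-\infty}u(x'_0,x_n)$ exists and equals~$-1$, which is~\eqref{TRAP01}. In particular the asymptotic profile~$\overline u$ in~\eqref{profiles} is the constant~$-1$, hence trivially~$2$D, so the result of~\cite{Tran} recalled in the Introduction yields that~$u$ is~$1$D when~$2\le n\le4$. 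The statements concerning~$\LLP$ follow by applying the above to the reflected solution~$x_n\mapsto-x_n$, which interchanges~$\LLM$ with~$\LLP$ and~$-1$ with~$+1$.
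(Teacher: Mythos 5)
Your opening step is sound and coincides with the paper's: the connectedness of the lower box $R=\{|x'|<d,\ -\overline K<x_n<-K\}$ together with \eqref{TRAP00} shows it cannot meet $\partial\LLM$, and the vertical monotonicity coming from \eqref{mono} combined with \eqref{ECE00} then forces $R\subseteq\LLM$. The genuine gap is exactly at the point you yourself flag as delicate: the claim that $R$ is disjoint from the support of the limit varifold $V$. Pointwise convergence $u_\eps\to-1$ on an open set does not make ``the rescaled energy density degenerate'' there, since the density $\eps|\nabla u_\eps|^2/2+W(u_\eps)/\eps$ involves a division by $\eps$ and a gradient term, neither of which is controlled by the behaviour of $u_\eps$ at fixed points. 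Moreover, \eqref{LU0} cannot be used to ``push the level sets away'' from a point of $R$, because it goes in the wrong direction: it places $\{|u_\eps|\le c\}$ inside a neighbourhood of $V$, whereas to exclude a point of $R$ from $V$ you would need the reverse inclusion, which is not among the properties recalled in the paper. As written, the passage from $R\subseteq\LLM$ to locally uniform convergence on $R$ is unsupported, and your unscaling at the moving base points $(\eps x_0',-K')$ genuinely requires it (pointwise convergence at fixed points says nothing about a sequence of points varying with $\eps$), so \eqref{TRAP01} is not established.

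The paper shows that this entire detour through the varifold is unnecessary. It suffices to evaluate at the single fixed point $P=(0,\dots,0,-\vartheta)$ with $\vartheta=(K+\overline K)/2$, which lies in $R\subseteq\LLM$: the pointwise convergence $u_\eps(P)\to-1$ means $u(0,\dots,0,-\vartheta/\eps)\to-1$, and since $u(0,\cdot)$ is increasing by \eqref{mono}, the full limit $\lim_{x_n\to-\infty}u(0,x_n)$ exists and equals $-1$. The asymptotic profile in \eqref{profiles} is then a solution of the same semilinear equation in $\R^{n-1}$ with values in $[-1,1]$ attaining the value $-1$ at the origin, so the strong maximum principle forces it to be identically $-1$, which is \eqref{TRAP01} for every $x'$. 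If you replace your uniform-convergence step by this one-point-plus-maximum-principle argument, the remainder of your proof (the appeal to Theorem~1.2 of \cite{Tran} for $2\le n\le4$ and the reflection $-u(x',-x_n)$ for the second half) is correct and agrees with the paper.
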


It is worth to point out that condition~\eqref{ECE00}
is quite natural: indeed, we already know from~\eqref{ECE0}
that~${\mathcal{C}}\LLM\cap {\rm C}(d,\overline K)\ne\varnothing$,
thus condition~\eqref{ECE00}
may be seen as the symmetric counterpart of it.
Also, by~\eqref{ECE0}, we have that~\eqref{ECE00}
is equivalent to
$$(\partial \LLM)\cap {\rm C}(d,\overline K)\ne\varnothing.$$
\medskip

We also observe that Theorem~\ref{T04} (and so Theorem~\ref{T1.0})
possesses
a measure theoretic version, in which one is allowed to modify~$\LLM$ or~$\LLP$
by sets of measure zero. We give a detailed statement for completeness:

\begin{theorem}\label{T04-BIS}
Let~$u$ be a solution of~\eqref{PDE}
satisfying~\eqref{mono} and~\eqref{e bound}.
Suppose that there exist~$d\in(0,d_o]$, 
and~$\overline K$, $K\in(0,h_o]$, with~$\overline K>K$,
and a set~$\EEEM\subseteq \LLM$ such 
that
\begin{equation}\label{ZERO}
|\LLM\setminus\EEEM|=0
,\end{equation}
\begin{equation}\label{ECE00-BIS}
|\EEEM\cap {\rm C}(d,\overline K)
|>0
\end{equation}
and
\begin{equation}\label{TRAP00-BIS}
(\partial \EEEM)\cap {\rm C}(d,\overline K)
\subseteq \{x_n> -K\}.
\end{equation}
Then
\begin{equation}\label{8fd67sgh}
\lim_{x_n\to-\infty}u(x',x_n)=-1.\end{equation}
Moreover, if~$2\le n\le 4$, then~$u$ is~$1$D.
\medskip

Similarly, if there exists
a set~$\EEEP\subseteq \LLP$ such
that
\begin{equation}\label{ZERO.1}
|\LLP\setminus\EEEP|=0,\end{equation}
\begin{equation}\label{ECE00-BIS.1}|\EEEP\cap {\rm C}(d,\overline K)|>0\end{equation}
and
\begin{equation}\label{TRAP00-BIS.1}
(\partial \EEEP)\cap {\rm C}(d,\overline K)
\subseteq \{x_n< K\},\end{equation}
then
$$ \lim_{x_n\to+\infty}u(x',x_n)=1.$$
Moreover, if~$2\le n\le 4$, then~$u$ is~$1$D.\medskip

In particular, if there are sets~$\EEEM\subseteq\LLM$ and~$\EEEP\subseteq\LLP$
satisfying~\eqref{ZERO},
\eqref{ECE00-BIS}, \eqref{TRAP00-BIS},
\eqref{ZERO.1},
\eqref{ECE00-BIS.1} and~\eqref{TRAP00-BIS.1}, then
\begin{itemize}
\item[{(i)}] The limits in~\eqref{LI} hold true,
\item[{(ii)}] The solution~$u$ is a local
minimizer,
\item[{(iii)}] If $2\le n\le 8$, then~$u$ is~$1$D.
\end{itemize}
\end{theorem}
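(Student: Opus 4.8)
The plan is to establish the lower conclusion~\eqref{8fd67sgh}; the statement involving~$\LLP$ and~$\EEEP$ then follows by applying the same argument to~$-u(x',-x_n)$ (which again solves~\eqref{PDE} and satisfies~\eqref{mono}, since~$W$ is even, and which interchanges~$\LLM$ with~$\LLP$), while the items~(i)--(iii) are obtained by combining the two one-sided results. First I would record two consequences of monotonicity. Since~$x_n\mapsto u(x',x_n)$ is increasing and bounded, the profile~$\overline u(x')$ in~\eqref{profiles} exists for every~$x'$; passing to the limit in the translates~$u(\cdot,\cdot+t)$ via interior elliptic estimates shows that~$\overline u$ is a bounded solution of~$\Delta\overline u=W'(\overline u)$ on~$\R^{n-1}$, with~$\overline u\ge-1$. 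Thus~\eqref{8fd67sgh} amounts to~$\overline u\equiv-1$. Secondly, the rescalings~$u_\eps$ inherit~\eqref{mono}, so~$\LLM$ is a \emph{lower set along vertical fibres}: if~$(x',x_n)\in\LLM$ and~$s<x_n$, then~$u_\eps(x',s)\le u_\eps(x',x_n)\to-1$, and since~$u_\eps\ge-1$ this forces~$(x',s)\in\LLM$.

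The heart of the proof is to show that the lower slab
\[
\Sigma:={\rm C}(d,\overline K)\cap\{x_n<-K\}
\]
is contained in~$\LLM$. By~\eqref{TRAP00-BIS}, $\Sigma\cap\partial\EEEM=\varnothing$; since~$\Sigma$ is open and connected and~$\R^n\setminus\partial\EEEM$ is the disjoint union of the open sets~$\mathrm{int}\,\EEEM$ and~$\R^n\setminus\overline{\EEEM}$, the set~$\Sigma$ lies either in~$\mathrm{int}\,\EEEM$ or in~$\R^n\setminus\overline{\EEEM}$. In the first case~$\Sigma\subseteq\EEEM\subseteq\LLM$ and we are done. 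I would rule out the second case, $\Sigma\cap\overline{\EEEM}=\varnothing$, as follows: then~$\EEEM\cap{\rm C}(d,\overline K)\subseteq\{x_n\ge-K\}$, so by~\eqref{ECE00-BIS} and Fubini the projection~$P\subseteq\{|x'|<d\}$ of~$\EEEM\cap{\rm C}(d,\overline K)$ onto~$\R^{n-1}$ has positive~$(n-1)$-dimensional measure. For each~$x'\in P$ there is~$x_n\ge-K$ with~$(x',x_n)\in\EEEM\subseteq\LLM$, whence, by the lower-set property, $(x',s)\in\LLM$ for all~$s\in(-\overline K,-K)$; as~$(x',s)\in\Sigma$ and~$\Sigma\cap\EEEM=\varnothing$, these points lie in~$\LLM\setminus\EEEM$. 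Thus~$P\times(-\overline K,-K)\subseteq\LLM\setminus\EEEM$ has positive measure, contradicting~\eqref{ZERO}. Hence~$\Sigma\subseteq\LLM$.

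Now the origin finishes the first assertion. For any~$s_0\in(-\overline K,-K)$ one has~$(0',s_0)\in\Sigma\subseteq\LLM$, so~$u(0',s_0/\eps)=u_\eps(0',s_0)\to-1$ along the subsequence; since~$s_0/\eps\to-\infty$ and~$u(0',\cdot)$ is increasing, this gives~$\overline u(0')=-1$, so that~$\overline u$ attains its minimal value~$-1$. Writing~$w:=\overline u+1\ge0$ and~$\Delta w=W'(\overline u)-W'(-1)=c(x')\,w$ with~$c(x'):=\int_0^1 W''(-1+t\,w(x'))\,dt$, and noting~$W''(-1)=8>0$, we have~$c>0$ near any point where~$\overline u$ is close to~$-1$; the strong maximum principle then shows that~$\{\overline u=-1\}$ is open. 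Being also closed and nonempty in the connected space~$\R^{n-1}$, it is all of~$\R^{n-1}$, i.e.~$\overline u\equiv-1$, which is~\eqref{8fd67sgh}. When~$2\le n\le4$, the profile~$\overline u$ is constant, hence in particular~$2$D, and the symmetry theorem of~\cite{Tran} (one~$2$D profile suffices in this range) yields that~$u$ is~$1$D. The symmetric hypotheses~\eqref{ZERO.1}--\eqref{TRAP00-BIS.1} give, in the same way, $\underline u\equiv+1$ and the corresponding~$1$D statement. Finally, if both families of assumptions hold, then~$\overline u\equiv-1$ and~$\underline u\equiv+1$ are precisely the limits~\eqref{LI}, proving~(i); a monotone solution satisfying~\eqref{LI} is a local minimizer (see, e.g.,~\cite{AC}), which is~(ii); and Savin's theorem~\cite{S} then gives that~$u$ is~$1$D for~$2\le n\le8$, which is~(iii).

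The step I expect to be most delicate is the slab argument. Replacing~$\LLM$ by the modified set~$\EEEM$ is subtle because altering a set by a null set can change its topological boundary, so the connectedness dichotomy must be run against~$\partial\EEEM$ --- the only boundary controlled, through~\eqref{TRAP00-BIS} --- whereas the monotone (lower-set) structure is available only for the genuine limit set~$\LLM$; the two are reconciled solely through~$|\LLM\setminus\EEEM|=0$ in~\eqref{ZERO}. One must also be careful to use the positivity~\eqref{ECE00-BIS} at the level of Lebesgue measure, via Fubini, rather than of mere nonemptiness, since in this measure-theoretic version individual points of~$\EEEM$ carry no information. A minor point to verify is that~$\overline u$ is genuinely a solution on~$\R^{n-1}$, so that the strong maximum principle applies; this is the standard~$C^2_{\mathrm{loc}}$ compactness of the vertical translates together with~$\partial_{x_n}\overline u\equiv0$.
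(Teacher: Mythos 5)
Your proof is correct and follows essentially the same route as the paper's: the connectedness dichotomy forced by~\eqref{TRAP00-BIS}, the vertical monotonicity of~$\LLM$ coming from~\eqref{mono}, and the measure hypotheses are combined to place the lower slab inside~$\LLM$, after which the profile at the origin, the maximum principle, and the cited symmetry results of~\cite{Tran} and~\cite{S} finish the argument. The only cosmetic difference is that in the bad case of the dichotomy you contradict~\eqref{ZERO} by sliding the $\LLM$-points downward to produce a positive-measure subset of~$\LLM\setminus\EEEM$, whereas the paper slides the non-convergence upward to contradict~\eqref{ECE00-BIS}; these are the same monotonicity argument run in dual directions.
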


The reader may compare~\eqref{ECE00-BIS} with~\eqref{ECE00} and~\eqref{TRAP00-BIS}
with~\eqref{TRAP00}.
\medskip


Next we use the approach that we developed in \cite{Tran}, in which symmetry results and qualitative properties of the solutions to the considered problem are obtained when at least one level set of the solution is a complete graph. More precisely, we have the following results:

\begin{theorem}\label{T1.0vecchio}
Let~$u$ be a solution of~\eqref{PDE}
satisfying~\eqref{mono} and~\eqref{e bound}.
Suppose that there exist~$d\in(0,d_o]$ and 
$\overline K\in(0,h_o]$ such that
\begin{equation}\label{lanuova}
\LLM\cap {\rm C}(d,\overline K)\ne\varnothing
\ {\mbox{ and }} \
\LLP\cap {\rm C}(d,\overline K)\ne\varnothing.
\end{equation}
Suppose also that there exist~$K\in(0,\overline K)$,
a set~$\EEE\subseteq
{\rm C}(d_0,h_0)$ and a value~$c\in(-1,1)$ such that
\begin{equation}\label{aCCO}
{\mbox{the level set~$\{u_\eps=c\}\cap {\rm C}(d,\overline K)$
converges uniformly to~$\partial\EEE$}}
\end{equation}
and
\begin{equation}\label{TRAPvecchio}
(\partial \EEE)\cap {\rm C}(d,\overline K)
\subseteq \{|x_n|< K\}.
\end{equation}
Then:
\begin{itemize}
\item[{(i)}] The level set $ \{u = c\}$ is a complete graph
in the vertical direction (i.e., for any~$x'\in\R^{n-1}$ there exists a unique $x_n(x')$ such that~$u(x',x_n(x'))=c$),
\item[{(ii)}] The limits in~\eqref{LI} hold true,
\item[{(iii)}] The solution~$u$ is a local
minimizer,
\item[{(iv)}] If $2\le n\le 8$, then~$u$ is~$1$D.
\end{itemize}
\end{theorem}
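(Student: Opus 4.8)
The plan is to reduce the statement to Theorem~\ref{T1.0} by upgrading the trapping of the limit level set~$\partial\EEE$ in~\eqref{TRAPvecchio} to the trapping of the two limit phases~$\LLM$ and~$\LLP$ required by~\eqref{TRAP}, and then to read off~(i) as an elementary consequence of~(ii). Observe first that~\eqref{lanuova} is verbatim condition~\eqref{ECE}, so the only ingredient missing in order to invoke Theorem~\ref{T1.0} is the confinement of~$(\partial\LLM)\cup(\partial\LLP)$ to a horizontal slab inside~${\rm C}(d,\overline K)$.

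The first and central step is therefore the following transfer of information. By~\eqref{aCCO} and~\eqref{TRAPvecchio}, for every small~$\eta>0$ and all sufficiently small~$\eps$ the level set~$\{u_\eps=c\}\cap{\rm C}(d,\overline K)$ lies in the slab~$\{|x_n|<K+\eta\}$; fixing~$K'\in(K,\overline K)$ and~$\eta$ small enough, this says that~$u_\eps\ne c$ on the upper region~$U^+:={\rm C}(d,\overline K)\cap\{x_n>K'\}$ and on the lower region~$U^-:={\rm C}(d,\overline K)\cap\{x_n<-K'\}$. By the monotonicity~\eqref{mono}, $u_\eps$ is increasing in~$x_n$, whence~$u_\eps>c$ on~$U^+$ and~$u_\eps<c$ on~$U^-$. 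Since the limit interface coincides, in the cylinder, with the common limit of the level sets~$\{u_\eps=c'\}$, $c'\in(-1,1)$ (this is where I would combine the convergence of~$\{|u_\eps|\le c'\}$ to the limit varifold from~\cite{HT} with monotonicity), the sets~$U^\pm$ lie at positive distance from this interface, and the uniform convergence~$u_\eps\to\pm1$ away from the interface forces~$u_\eps\to+1$ on~$U^+$ and~$u_\eps\to-1$ on~$U^-$, because a limit value~$\pm1$ that is~$\ge c>-1$ on~$U^+$ (resp.~$\le c<1$ on~$U^-$) can only be~$+1$ (resp.~$-1$). Hence~$U^+\subseteq\LLP$ and~$U^-\subseteq\LLM$, so that~$(\partial\LLM)\cup(\partial\LLP)$ meets~${\rm C}(d,\overline K)$ only inside~$\{|x_n|\le K'\}$; after replacing~$K$ by a value in~$(K',\overline K)$, this is precisely~\eqref{TRAP}.

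With~\eqref{ECE} granted by~\eqref{lanuova} and~\eqref{TRAP} now established, Theorem~\ref{T1.0} applies and yields at once conclusion~(ii) (the limits~\eqref{LI}), conclusion~(iii) (local minimality), and conclusion~(iv) (one--dimensional symmetry for~$2\le n\le8$). Finally, conclusion~(i) follows from~(ii): for each fixed~$x'$ the section~$x_n\mapsto u(x',x_n)$ is continuous and, by~\eqref{mono}, strictly increasing, with limits~$-1$ and~$+1$ at~$\mp\infty$ thanks to~\eqref{LI}; hence it attains the value~$c\in(-1,1)$ at exactly one point~$x_n(x')$, so that~$\{u=c\}$ is a complete graph in the vertical direction. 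This is also the point of contact with~\cite{Tran}: once~(i) is known, the symmetry conclusion~(iv) may alternatively be quoted directly from the complete--graph results of~\cite{Tran}.

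I expect the main obstacle to be the transfer carried out in the second paragraph, namely converting the confinement of the \emph{observable} level set~$\{u_\eps=c\}$ into the confinement of the \emph{limit phases}~$\LLM$ and~$\LLP$. The delicate points are the identification, in the cylinder, of~$\partial\EEE$ with the support of the limit varifold of~\cite{HT} (and in particular the inclusion of the \emph{whole} interface, not merely~$\partial\EEE$, in the slab), the use of monotonicity to fix the sign of~$u_\eps$ on either side of the slab and then upgrade it to the full~$\pm1$ limit, and the bookkeeping of the strict inclusions and of the constants~$K<K'<\overline K$ together with lateral--boundary effects of the cylinder. Since the sign argument most naturally produces almost--everywhere statements about~$\LLM$ and~$\LLP$, the more robust target for the reduction may in fact be the measure--theoretic version, Theorem~\ref{T04-BIS}. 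Once this transfer is in place, steps~(ii)--(iv) are a direct appeal to the earlier theorems and step~(i) is elementary.
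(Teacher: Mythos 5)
Your reduction stalls precisely at the step you yourself flag as the main obstacle, and the gap is real. To invoke Theorem~\ref{T1.0} you must confine $(\partial \LLM)\cup(\partial\LLP)$ to the slab, i.e.\ you need the \emph{pointwise} inclusions $U^+\subseteq\LLP$ and $U^-\subseteq\LLM$ for your regions $U^\pm$. Your first route --- ``$U^\pm$ lie at positive distance from the interface, so the uniform convergence away from $V$ applies'' --- needs the inclusion of $V$ in a neighborhood of the level sets, whereas the convergence statement the paper sets up, \eqref{LU0}, goes only the other way: it confines the level sets near $V$, not $V$ near the level sets. (The two-sided local Hausdorff convergence does hold in \cite{HT}, but it is not part of the paper's framework, and there are additional boundary issues when $d=d_o$ or $\overline K=h_o$.) Your fallback via a.e.\ convergence yields only $|U^-\setminus\LLM|=0$, and this is not enough even for Theorem~\ref{T04-BIS}: that theorem requires a set $\EEEM\subseteq\LLM$ whose boundary misses the lower block $Q_-$, and since removing a null set from $\LLM$ cannot add points, such an $\EEEM$ can exist only if the interior of $Q_-$ is already contained in $\LLM$ pointwise --- exactly what the a.e.\ statement fails to deliver. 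A smaller issue: monotonicity alone does not determine the sign of $u_\eps-c$ on $U^\pm$; you need \eqref{lanuova} to produce a point of $U^+$ where $u_\eps>c$ (and of $U^-$ where $u_\eps<c$), and then the connectedness of $U^\pm$ together with $u_\eps\ne c$ there.

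The irony is that once you have ``$u_\eps>c$ on $U^+$ and $u_\eps<c$ on $U^-$ for all small $\eps$'', you are done without ever touching $\LLM$, $\LLP$ or $V$: on each vertical segment $\{x'\}\times(-\overline K,\overline K)$ with $|x'|<d$ the intermediate value theorem produces a point where $u_\eps=c$, so $\{u_\eps=c\}$ meets every vertical line over $B^{n-1}_d$; rescaling (take $\eps$ so small that $\eps x'_o\in B^{n-1}_d$) gives claim~(i) directly, with uniqueness of the crossing coming from \eqref{mono}. This is, in essence, the paper's proof: it establishes (i) first, by an open-and-closed connectedness argument on the projection of $\{u_\eps=c\}$ (Corollary~\ref{3456789011122}) fed by \eqref{lanuova}, \eqref{aCCO}, \eqref{TRAPvecchio} and \eqref{mono}, and then obtains (ii)--(iv) from the complete-graph theorem, Theorem~1.3 of~\cite{Tran} --- the route you mention only as an afterthought. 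So the fix is to reverse your logical order: derive (i) from the sign information on $U^\pm$, not from (ii).
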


We observe that condition~\eqref{aCCO} is somehow natural.  
For instance,  if~$u$ is a quasiminimal solution, condition~\eqref{aCCO} is satisfied by choosing a set $\EEE$ which differs from $\LLM$ by a set of measure zero (cf. Corollary~2 in~\cite{FV}). The same remark applies to the next Theorem \ref{T1.0vecchio-nuova}

\medskip

A variant of Theorem~\ref{T1.0vecchio}
that replaces assumption~\eqref{lanuova}
with some measure theoretic conditions on the set~$\EEE$
goes as follows:

\begin{theorem}\label{T1.0vecchio-nuova}
Let~$u$ be a solution of~\eqref{PDE}
satisfying~\eqref{mono} and~\eqref{e bound}.
Suppose that there exist~$d\in(0,d_o]$, 
$\overline K$, $K\in(0,h_o]$, with~$\overline K>K$, and
a set~$\EEE\subseteq
{\rm C}(d_0,h_0)$ such that
\begin{equation}\label{COC}
|\EEE\setminus\LLM|=|\LLM\setminus \EEE|=0,
\end{equation}
\begin{equation}\label{ECEvecchio}
|\EEE\cap {\rm C}(d,\overline K)|>0, \qquad
|({\mathcal{C}}\EEE)\cap {\rm C}(d,\overline K)|>0,
\end{equation}
for some~$c\in(-1,1)$
\begin{equation*}
{\mbox{the level set~$\{u_\eps=c\}\cap {\rm C}(d,\overline K)$
converges uniformly to~$\partial\EEE$,}}
\end{equation*}
and
\begin{equation}\label{ECEvecchio2}
(\partial \EEE)\cap {\rm C}(d,\overline K)
\subseteq \{|x_n|< K\}.
\end{equation}
Then:
\begin{itemize}
\item[{(i)}] The level set~$ \{u = c\}$ is a complete graph
in the vertical direction,
\item[{(ii)}] The limits in~\eqref{LI} hold true,
\item[{(iii)}] The solution~$u$ is a local minimizer,
\item[{(iv)}] If $2\le n\le 8$, then~$u$ is~$1$D.
\end{itemize}
\end{theorem}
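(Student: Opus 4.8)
The plan is to deduce Theorem~\ref{T1.0vecchio-nuova} from Theorem~\ref{T1.0vecchio} by a purely measure-theoretic reduction. Comparing the two statements, the convergence hypothesis on the level set~$\{u_\eps=c\}$ and the trapping condition~\eqref{ECEvecchio2} (which is literally~\eqref{TRAPvecchio}) are assumed in both; the only hypothesis of Theorem~\ref{T1.0vecchio} that is not explicitly present in Theorem~\ref{T1.0vecchio-nuova} is the nonemptiness condition~\eqref{lanuova}. Hence it suffices to show that the measure-theoretic assumptions~\eqref{COC} and~\eqref{ECEvecchio} on~$\EEE$ force~\eqref{lanuova}, after which Theorem~\ref{T1.0vecchio}, applied with the same data~$d$, $\overline K$, $K$, $\EEE$ and~$c$, yields conclusions~(i)--(iv) at once.

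First I would record the elementary set identities satisfied by the limit level sets. Since, along the fixed subsequence, $u_\eps\to\pm1$ a.e. in~${\rm C}(d_o,h_o)$, every point of the cylinder, apart from a set~$N$ of measure zero, lies either in~$\LLM$ or in~$\LLP$, and these two sets are disjoint by construction. Therefore
\begin{equation*}
{\rm C}(d_o,h_o)=\LLM\cup\LLP\cup N,\qquad |N|=0,
\end{equation*}
so that~${\mathcal{C}}\LLM$ coincides with~$\LLP$ up to a null set. Combining this with~\eqref{COC}, which asserts that~$\EEE$ and~$\LLM$ coincide up to a null set, I obtain that~$\EEE$ agrees with~$\LLM$ and that~${\mathcal{C}}\EEE$ agrees with~${\mathcal{C}}\LLM=\LLP$, in both cases up to a null set. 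The only delicate point is that, as the authors stress, $\LLM$ is defined \emph{pointwise} rather than as a set of Lebesgue density one; this is harmless here, since all I use is that the a.e.\ convergence to~$\pm1$ makes the above disjoint decomposition of full measure valid.

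With these identities in hand, the two inequalities in~\eqref{ECEvecchio} transfer directly to~$\LLM$ and~$\LLP$. From~$|\EEE\cap {\rm C}(d,\overline K)|>0$ and the fact that~$\EEE$ agrees with~$\LLM$ up to a null set, I get~$|\LLM\cap {\rm C}(d,\overline K)|>0$, whence~$\LLM\cap {\rm C}(d,\overline K)\ne\varnothing$; symmetrically, $|({\mathcal{C}}\EEE)\cap {\rm C}(d,\overline K)|>0$ together with the fact that~${\mathcal{C}}\EEE$ agrees with~$\LLP$ up to a null set gives~$\LLP\cap {\rm C}(d,\overline K)\ne\varnothing$. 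This is precisely~\eqref{lanuova}, and the reduction is complete. I do not anticipate a genuine obstacle here: the entire content is the bookkeeping of the three sets~$\EEE$, $\LLM$ and~$\LLP$ modulo null sets, the only subtlety being to keep the pointwise definition of~$\LLM$ compatible with these measure-theoretic manipulations, as noted above.
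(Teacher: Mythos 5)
Your proposal is correct and follows the same route as the paper: the paper likewise deduces from~\eqref{ECEvecchio}, \eqref{COC} and the a.e.\ convergence of~$u_\eps$ that both~$\LLM\cap {\rm C}(d,\overline K)$ and~$\LLP\cap {\rm C}(d,\overline K)$ have positive measure, hence are nonempty, so that~\eqref{lanuova} holds and Theorem~\ref{T1.0vecchio} applies. Your write-up merely makes explicit the null-set bookkeeping (the decomposition~${\rm C}(d_o,h_o)=\LLM\cup\LLP\cup N$ with~$|N|=0$) that the paper leaves implicit.
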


We notice that the difference between Theorems~\ref{T1.0vecchio}
and~\ref{T1.0vecchio-nuova}
is that assumption~\eqref{lanuova},
which involves the sets~$\LLM$ and~$\LLP$,
is replaced by assumptions~\eqref{COC}
and~\eqref{ECEvecchio}, which are similar but only involve the
set~$\EEE$. Also, in Theorem~\ref{T1.0vecchio},
one does not need to assume a-priori that the set~$\EEE$
coincides with~$\LLM$ up to sets of measure zero.

We also remark that assumption~\eqref{COC} may be replaced
by the similar one that involves~$\LLP$ instead of~$\LLM$, namely
$$|\EEE\setminus\LLP|=|\LLP\setminus \EEE|=0.$$
\medskip

Now we present further
rigidity and symmetry results that follow, at least
in dimension~$4$, under the
assumption that the level set~$\{u=0\}$ is confined below
(or above) a complete graph. We emphasize that no energy assumption is needed in this case. Indeed (6), in this case, is a byproduct of the other hypotheses. Namely, we have the following result:

\begin{theorem}\label{TTX}
Let~$u$ be a solution of~\eqref{PDE}
satisfying~\eqref{mono}.

Suppose that~$\{u=0\}$ is confined below
a complete graph, i.e. suppose that there exists~$\gamma\in C(\R^{n-1})$
such that
\begin{equation}\label{02}
\{u=0\}\subseteq \{ x_n \le \gamma(x')\}.\end{equation}
Then
\begin{equation}\label{Tx1} \lim_{x_n\rightarrow+\infty} u(x',x_n)=1.\end{equation}
Similarly, if~$\{u=0\}$ is confined above
a complete graph, then
$$ \lim_{x_n\rightarrow-\infty} u(x',x_n)=-1.$$

In any case, the energy bound in~\eqref{e bound} holds true.

Moreover, if~$2\le n\le 4$, then~$u$ is~$1$D.
\end{theorem}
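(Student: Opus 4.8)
The plan is to first upgrade the geometric hypothesis~\eqref{02} into the pointwise limit~\eqref{Tx1}, then to deduce the energy bound~\eqref{e bound} as a consequence of it, and finally to invoke the profile result of~\cite{Tran} quoted in the Introduction to obtain $1$D symmetry for $2\le n\le4$. Throughout I would treat only the ``confined below'' case: the ``confined above'' case follows by applying it to $\tilde u(x',x_n):=-u(x',-x_n)$, which again solves~\eqref{PDE} (since $W'$ is odd) and satisfies~\eqref{mono}, and whose zero set is the reflection of $\{u=0\}$, hence confined above the graph of $-\gamma$.

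First I would record that, by~\eqref{mono}, for every $x'$ the map $x_n\mapsto u(x',x_n)$ is increasing, so $\underline u(x'):=\lim_{x_n\to+\infty}u(x',x_n)$ exists; writing $v_\tau(x):=u(x',x_n+\tau)$ and using interior elliptic estimates on the bounded solutions $v_\tau$, the family $v_\tau$ converges in $C^2_{loc}(\R^n)$ as $\tau\to+\infty$ to $\underline u$, which is therefore a solution of $\Delta\underline u=W'(\underline u)$ on $\R^{n-1}$ with $|\underline u|\le1$; moreover $\underline u$ is stable, since stability~\eqref{STA} is preserved by translation and passes to $C^2_{loc}$ limits. Next I would show $\underline u\ge0$: the open epigraph $E:=\{x_n>\gamma(x')\}$ is path-connected and, by~\eqref{02}, $u$ does not vanish on $E$, so $u$ has a constant sign there; this sign cannot be negative, for otherwise monotonicity would force $u<0$ on all of $\R^n$, which is excluded by a symmetric sup/inf argument (as in the next paragraph) together with~\eqref{EMU} and the strong maximum principle. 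Hence $u>0$ on $E$, and so $\underline u\ge0$.

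The heart of the argument is to prove $\underline u\equiv1$. Let $m:=\inf_{\R^{n-1}}\underline u\in[0,1]$ and pick $x_k$ with $\underline u(x_k)\to m$; a $C^2_{loc}$ limit of the translates $\underline u(\cdot+x_k)$ yields a stable solution $v_\infty$ with $v_\infty\ge m$ everywhere and $v_\infty(0)=m$. Since $0$ is an interior minimum, $\Delta v_\infty(0)\ge0$, whereas the equation gives $\Delta v_\infty(0)=W'(m)=4m(m^2-1)$; as $W'<0$ on $(0,1)$, this forces $m\in\{0,1\}$. The value $m=0$ is excluded because it makes $v_\infty$ a nonnegative superharmonic solution attaining an interior zero minimum, whence $v_\infty\equiv0$ by the strong minimum principle, contradicting the stability of $v_\infty$ (the constant $0$ is unstable, as $W''(0)=-4<0$). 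Therefore $m=1$ and $\underline u\equiv1$, which is~\eqref{Tx1}. I expect this step --- producing $\underline u\equiv1$ in \emph{every} dimension from only the one-sided trapping --- to be the main obstacle, since a bounded superharmonic function need not be constant in dimension $\ge3$; the point is that this translation/extremum argument bypasses any Liouville theorem and uses only the sign of $W'$ together with stability.

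With~\eqref{Tx1} in hand, the bound~\eqref{e bound} follows from a one-sided translation identity. Using that $v_\tau$ solves~\eqref{PDE}, integration by parts gives $\frac{d}{d\tau}{\mathcal{E}}(v_\tau,B_R)=\oint_{\partial B_R}(\partial_\nu v_\tau)(\partial_n v_\tau)\,d\sigma$, while ${\mathcal{E}}(v_\tau,B_R)\to0$ as $\tau\to+\infty$ because $v_\tau\to1$ in $C^2(\overline{B_R})$. Integrating in $\tau$ and using the global gradient bound $|\nabla u|\le C_0$ together with $\int_0^\infty\partial_n v_\tau\,d\tau=1-u\le2$, one gets ${\mathcal{E}}(u,B_R)=-\int_0^\infty\oint_{\partial B_R}(\partial_\nu v_\tau)(\partial_n v_\tau)\,d\sigma\,d\tau\le C_0\oint_{\partial B_R}(1-u)\,d\sigma\le CR^{n-1}$, which is~\eqref{e bound}. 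Finally, since $\underline u\equiv1$ is constant and hence in particular $2$D, the solution $u$ satisfies~\eqref{PDE}, \eqref{mono}, the energy bound, and has a $2$D asymptotic profile, so the result of~\cite{Tran} recalled in the Introduction gives that $u$ is $1$D for $2\le n\le4$. The only remaining routine points would be the elliptic estimates justifying the $C^2_{loc}$ convergence and the differentiation under the integral sign.
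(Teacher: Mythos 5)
Your proposal is correct, and its skeleton matches the paper's: both arguments use that the open epigraph $\{x_n>\gamma(x')\}$ is path-connected and disjoint from $\{u=0\}$ to obtain a constant (necessarily positive) sign of $u$ there, deduce that the profile at $x_n\to+\infty$ is a nonnegative bounded solution of the same equation in $\R^{n-1}$, show it is identically $1$, and conclude via the 2D-profile result of~\cite{Tran} when $2\le n\le4$. Where you genuinely diverge is in how the two Liouville-type facts are established. The paper invokes Theorem~2.1 of~\cite{FAR} twice --- once to rule out that $u$ has a constant sign on all of $\R^n$ (so that $\{u=0\}\ne\varnothing$), once to upgrade the positive profile to the constant $1$ --- and that theorem requires no stability, only the linear behavior of $W'$ at the origin. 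You instead prove both facts from scratch by sliding to the extremum: a translation limit attaining the infimum (resp.\ supremum) must satisfy $W'\ge0$ (resp.\ $\le0$) there, which pins the extremal value to $\{0,1\}$ (resp.\ $\{0\}$), and the value $0$ is excluded because the strong maximum principle forces the limit to be the constant $0$, which is unstable since $W''(0)=-4<0$, while stability survives translations and $C^2_{loc}$ limits. This is self-contained but leans on~\eqref{STA} (hence on~\eqref{mono}), whereas the paper's citation does not; both routes ultimately exploit the nondegeneracy of $W$ at the origin, consistent with the paper's remark that this is the one theorem where linear growth of the nonlinearity at $0$ is needed. You also write out the proof of the energy bound~\eqref{e bound} via the sliding identity $\frac{d}{d\tau}{\mathcal{E}}(v_\tau,B_R)=\oint_{\partial B_R}(\partial_\nu v_\tau)(\partial_n v_\tau)\,d\sigma$, i.e.\ the Ambrosio--Cabr\'e computation; the paper asserts the bound without proof, so your version is more complete on that point. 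The only items to tighten are routine: the passage from stability of the $x_n$-independent limit as a function on $\R^n$ to stability in $\R^{n-1}$ (or simply run the whole extremum argument in $\R^n$), and the elliptic estimates underlying the $C^2_{loc}$ compactness.
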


While in Theorem~\ref{T1.0}, \ref{T04}, \ref{T04-BIS} and~\ref{TTX} we have deduced
symmetry results from the structure of
the limit level set~$\LLM$, now
we will prove further rigidity results
under some geometric control on the limit varifold~$V$:

\begin{theorem}\label{T1}
Let~$u$ be a solution of~\eqref{PDE}
satisfying~\eqref{mono} and~\eqref{e bound}
and let~$V$ be the associated limit varifold.

Suppose that there exist two points~$\overline x=(0,\dots,0,\overline x_n)$
and~$\underline x=(0,\dots,0,\underline x_n)$ that do not belong to~$V$,
with
$$h_o>\overline x_n>0>\underline x_n>-h_o.$$
Then:
\begin{itemize}
\item[{(i)}] The limits in~\eqref{LI} hold true,
\item[{(ii)}] The solution~$u$ is a local
minimizer,
\item[{(iii)}] If $2\le n\le 8$, then~$u$ is~$1$D.
\end{itemize}
\end{theorem}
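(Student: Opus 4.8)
The plan is to use the two points off the limit varifold to read off the behaviour of $u$ along the vertical axis, then to propagate this to all of $\R^{n-1}$ through the maximum principle applied to the asymptotic profiles in~\eqref{profiles}, and finally to invoke the known theory to get minimality and $1$D symmetry. First I would evaluate the rescaled solution at $\overline x$. Since $\overline x=(0,\dots,0,\overline x_n)$ with $\overline x_n>0$, we have
\[
u_\eps(\overline x)=u(0,\dots,0,\overline x_n/\eps),
\]
and as $\eps\to0^+$ the height $\overline x_n/\eps\to+\infty$, so by the monotonicity~\eqref{mono} this quantity increases to $\ell_+:=\lim_{t\to+\infty}u(0,\dots,0,t)\le1$, the limit existing precisely because of~\eqref{mono}. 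On the other hand, since $\overline x\notin V$ and the support of the limit varifold is closed, a whole ball around $\overline x$ is a compact connected subset of ${\rm C}(d_o,h_o)\setminus V$, so by the first property of $V$ recalled above $u_\eps$ converges there uniformly to $+1$ or to $-1$; in particular $\ell_+\in\{-1,+1\}$. Finally, \eqref{EMU} and~\eqref{mono} give $\ell_+\ge u(0)>-1$, whence $\ell_+=1$. The symmetric computation at $\underline x$ (where $\underline x_n<0$ forces $\underline x_n/\eps\to-\infty$) yields $\lim_{t\to-\infty}u(0,\dots,0,t)=-1$.

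Next I would upgrade these one-dimensional limits to the full statement~\eqref{LI}. By standard elliptic estimates the translates $u(x',x_n+k)$ converge in $C^2_{\rm loc}$ as $k\to+\infty$ to the profile $\underline u=\underline u(x')$ of~\eqref{profiles}, which is an $x_n$-independent solution of $\Delta\underline u=W'(\underline u)$ on $\R^{n-1}$ with values in $[-1,1]$. The previous step gives $\underline u(0)=1$, so $\underline u$ attains its maximal admissible value at an interior point. Writing $v:=1-\underline u\ge0$ and using $W(r)=(1-r^2)^2$ one finds $\Delta v=4\,\underline u(1+\underline u)\,v$ with bounded coefficient, so the strong maximum principle forces $v\equiv0$, i.e. $\underline u\equiv1$; this is exactly $\lim_{x_n\to+\infty}u(x',x_n)=1$. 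The symmetric argument gives $\overline u\equiv-1$, and together these establish~\eqref{LI}, which is conclusion~(i).

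Once~\eqref{LI} is available, conclusions~(ii) and~(iii) follow from the known theory. For~(ii), the ordered family of vertical translates $\{u(x',x_n+t)\}_{t\in\R}$ is monotone in $t$ by~\eqref{mono} and, by~\eqref{LI}, sweeps out the full range $(-1,1)$ as $t$ runs over $\R$; hence $u$ is a leaf of a foliation by solutions and is therefore a local minimizer by the standard calibration argument. For~(iii), in the range $2\le n\le8$ a monotone solution satisfying~\eqref{LI} is $1$D by the results recalled in the introduction: the cases $2\le n\le3$ are classical, and $4\le n\le8$ is covered by Savin's theorem~\cite{S}.

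I expect the only genuinely delicate point to be the passage from ``$\overline x\notin V$'' to ``$\ell_+\in\{-1,+1\}$'': it hinges on using the closedness of the varifold support to place $\overline x$ strictly inside a connected component of ${\rm C}(d_o,h_o)\setminus V$, and then on the uniform convergence of $u_\eps$ on compact connected pieces of that complement, which is the first property of $V$. Everything else is a routine combination of monotonicity, the strong maximum principle, and the cited symmetry results.
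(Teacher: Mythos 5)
Your proposal is correct and follows essentially the same route as the paper: the closedness of the support of $V$ plus the uniform convergence of $u_\eps$ to $\pm1$ on compact connected subsets of ${\rm C}(d_o,h_o)\setminus V$ forces $u_\eps(\overline x)\to\pm1$, monotonicity (the paper instead argues that $\overline u\equiv-1$ would force $u\equiv-1$) rules out the wrong sign, the maximum principle upgrades the pointwise limit to~\eqref{LI}, and then (ii) and (iii) follow from the known theory in~\cite{S}. The only cosmetic difference is that you spell out the calibration argument for (ii), which the paper delegates to the cited references.
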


Concerning the assumptions of Theorem~\ref{T1.0} and~\ref{T1},
it is worth to remark that, in general, condition \eqref{mono}
is not enough to imply~\eqref{TRAP}, nor the existence
of the points~$\overline x$ and~$\underline x$ in
Theorem~\ref{T1}.
The reason is, roughly speaking, that the limit interface
could be ``vertical''. For instance, let
$$ u(x_1,x_2):=-\frac{2}{\pi}\,{\rm arctg}\, ( x_1-\,{\rm arctg}\, x_2).$$
Since the derivative of the function~$t\mapsto \,{\rm arctg}\, t$ is strictly
positive, one readily checks that $\partial_{x_2} u>0$, hence
condition~\eqref{mono} is satisfied.
Nevertheless, we have that
$$ u_\varepsilon(x_1,x_2)=
-\frac{2}{\pi}\,{\rm arctg}\, \left( \frac{x_1}{\varepsilon}
-\,{\rm arctg}\, \frac{x_2}{\varepsilon}\right),$$
therefore, for any $c\in(-1,1)$,
$$ \{u_\varepsilon =c\}= 
\left\{ x_1= \varepsilon\left[\,{\rm arctg}\, \frac{x_2}{\varepsilon}
-\,{\rm tg}\, \frac{c\,\pi}{2}\right]\right\}.$$
Accordingly, $\{u_\varepsilon=c\}$ approaches $\{x_2=0\}$,
which shows that the geometric condition in Theorem~\ref{T1}
is not satisfied in this case.
\medskip

In dimension~$n\le4$, Theorem~\ref{T1} can be 
strengthened:
namely, it is enough that the limit varifold does not contain the vertical
line in order to deduce symmetry properties, as stated in next result:

\begin{theorem}\label{T2}
Let~$2\le n\le 4$ and~$u$ be a solution of~\eqref{PDE}
satisfying~\eqref{mono} and~\eqref{e bound}
and let~$V$ be the associated limit varifold.

Suppose that there is a point of the vertical line ${r}_\star:=\{ (0,\dots,0,t),\;t \in \R \}$ which 
is not contained in~$V$.

Then~$u$ is~$1$D.
\end{theorem}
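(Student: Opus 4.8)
The plan is to reduce Theorem~\ref{T2} to the one-sided asymptotic statement \eqref{TRAP01}, and then to invoke the dimension $\le 4$ symmetry already available in Theorem~\ref{T04}. First I would record two structural facts. By \eqref{EMU} the value $u_\eps(0)=u(0)$ stays in $(-1,1)$, so $0\notin\LLM\cup\LLP$; since on each connected compact subset of ${\rm C}(d_o,h_o)\setminus V$ the functions $u_\eps$ converge uniformly to $\pm1$, the origin cannot lie in that open set, whence $0\in V$. In particular the point of $r_\star$ provided by the hypothesis (read as a point $(0,\dots,0,t_\star)$ with $|t_\star|<h_o$, the only nontrivial reading since $V\subseteq{\rm C}(d_o,h_o)$) satisfies $t_\star\ne0$. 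Moreover \eqref{mono} survives the rescaling, so $\LLM$ is \emph{downward closed} and $\LLP$ is \emph{upward closed} along vertical segments inside the cylinder: if $u_\eps(x',x_n)\to-1$ and $x_n'<x_n$, then $-1\le u_\eps(x',x_n')\le u_\eps(x',x_n)\to-1$ forces $(x',x_n')\in\LLM$, and symmetrically for $\LLP$.

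Assume $t_\star<0$ (the case $t_\star>0$ is symmetric, exchanging $\LLM$ with $\LLP$ and $-\infty$ with $+\infty$). Since $p:=(0,\dots,0,t_\star)\notin V$, it lies in $\LLM\cup\LLP$; were $p\in\LLP$, upward closedness would place $0$ (which sits above $p$) in $\LLP$, contradicting $0\notin\LLP$. Hence $p\in\LLM$, i.e. $u(0,\dots,0,t_\star/\eps)=u_\eps(p)\to-1$ as $\eps\to0$, and since $t_\star/\eps\to-\infty$ the monotone limit on the axis equals $-1$.

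Next I would upgrade this to the full profile. By \eqref{mono} the limit $\overline u(x'):=\lim_{x_n\to-\infty}u(x',x_n)$ exists pointwise, and by standard elliptic estimates the translates $u(\cdot,\cdot-t)$ converge in $C^2_{\rm loc}(\R^n)$ to $\overline u$, which is independent of $x_n$ and solves $\Delta\overline u=W'(\overline u)$ on $\R^{n-1}$. We have $\overline u\ge-1$ everywhere and $\overline u(0)=-1$ from the previous step, so $\overline u+1\ge0$ attains an interior minimum value $0$; writing $W'(\overline u)=c(x')\,(\overline u+1)$ with $c$ bounded (since $W'(-1)=0$ and $W'$ is smooth on the compact range of $\overline u$), the strong maximum principle applied to $(\Delta-c)(\overline u+1)=0$ yields $\overline u\equiv-1$. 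This is precisely the one-sided limit \eqref{TRAP01}.

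Finally, $\overline u\equiv-1$ is constant, hence trivially $2$D, so the asymptotic profile falls under the hypotheses that, for $2\le n\le4$, force a monotone solution to be $1$D --- exactly the mechanism packaged in the ``moreover'' part of Theorem~\ref{T04}. I would therefore conclude that $u$ is $1$D. Contrast this with Theorem~\ref{T1}, where two points off $V$ yield the full condition \eqref{LI} and the stronger range $n\le8$: with a single point we only control one side, and it is the dimension $\le4$ classification that still closes the argument. The main obstacle is the passage from the axial limit to $\overline u\equiv-1$: one must justify the profile equation and, above all, apply the strong maximum principle correctly at the value $r=-1$ where $W'$ vanishes; a secondary care point is fixing the phase of $p$ via $0\in V$ and monotonicity, which is what makes a single point off the vertical line enough.
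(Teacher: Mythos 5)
Your proof is correct and follows essentially the same route as the paper: you show $0\in V$ so the off-$V$ point has $t_\star\ne0$, deduce the corresponding one-sided limit $\pm1$ (the content of the paper's Lemma~\ref{Lemma 1}, which you re-derive inline via $\LLM$/$\LLP$ membership, vertical monotonicity and the strong maximum principle), and conclude with the $2\le n\le4$ classification of Theorem~1.2 of~\cite{Tran}. The only cosmetic differences are that you fix the sign of the limit by noting $0\notin\LLP$ rather than by excluding the constant $\mp1$ profile, and you cite the final step through the ``moreover'' clause of Theorem~\ref{T04} instead of directly through~\cite{Tran}.
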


We remark that the results obtained are valid also for more
general bistable nonlinearities than the classical Allen-Cahn
equation (as a matter of fact,
only in Theorem~\ref{TTX}
one needs the nonlinearity to grow linearly at the origin,
in order to exploit the results of~\cite{Far99,FAR}).

Also, it is worth to point out that Theorems~\ref{T1.0}, \ref{T04}, \ref{T04-BIS}
and~\ref{TTX} do not use the limit varifold~$V$, but
only the limit level sets~$\LLM$ and $\LLP$. 
\medskip

We also observe that the assumptions used in this paper
are geometric and related to the asymptotic behavior
of the interface of the problem: we think that these
kind of hypotheses are somehow natural from the viewpoint
of the physical applications and they may be easier to check
in concrete cases than the usual variational assumptions
(such as the one requiring minimality of the solution).

Furthermore, conditions on the limit interface are perhaps more feasible
to be checked in concrete applications, when one ``sees'' in practice
the interface, and then can deduce from the symmetry results of the theory
that the phase state depends, at a large scale, only on the distance
from the phase separation.
\medskip

In this, spirit, as a final observation, we point out
that the symmetry of the solution is, in the end, somehow equivalent
to the flatness of its interface, according to the following result:

\begin{corollary}\label{CORO}
Let~$u$ be a solution of~\eqref{PDE}
satisfying~\eqref{mono} and~\eqref{e bound}. 
Then~$u$ is~$1$D
if and only if~$\partial \LLM=\partial\LLP$ is a non-vertical hyperplane containing the origin.
\end{corollary}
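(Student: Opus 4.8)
The plan is to prove the two implications separately, handling the forward direction by an explicit computation on the one-dimensional profile and the reverse direction by reducing to Theorem~\ref{T1.0}.

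For the ``only if'' part, suppose $u$ is $1$D, so that $u(x)=g(\omega\cdot x)$ for some unit vector $\omega=(\omega',\omega_n)$ and some bounded monotone profile $g$. Since a bounded monotone solution of the profile equation must be the heteroclinic connecting the two wells, one has $g(t)\to\pm1$ as $t\to\pm\infty$, and~\eqref{mono} forces $\omega_n>0$, because $\partial_n u=g'(\omega\cdot x)\,\omega_n$ with $g'>0$. I would then look at the rescaling $u_\eps(x)=g\big((\omega\cdot x)/\eps\big)$: as $\eps\to0^+$ it tends to $+1$ on $\{\omega\cdot x>0\}$, to $-1$ on $\{\omega\cdot x<0\}$, and stays equal to $g(0)\in(-1,1)$ on the hyperplane $H:=\{\omega\cdot x=0\}$. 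Hence, inside the cylinder, $\LLP=\{\omega\cdot x>0\}$ and $\LLM=\{\omega\cdot x<0\}$, so that $\partial\LLM=\partial\LLP=H$, a hyperplane through the origin which is non-vertical since $\omega_n\neq0$.

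For the ``if'' part, assume $\partial\LLM=\partial\LLP=H$ for a non-vertical hyperplane $H$ through the origin, written as $H=\{\omega\cdot x=0\}$ with, after a choice of orientation, $\omega_n>0$. The first step is to identify $\LLM$ and $\LLP$ with the two open half-spaces cut out by $H$. To this end I would exploit~\eqref{mono}: if $p\in\LLM$ and $q$ lies directly below $p$ (same horizontal coordinates, smaller vertical coordinate), then $u_\eps(q)\le u_\eps(p)\to-1$, whence $q\in\LLM$; thus $\LLM$ is downward-closed, and symmetrically $\LLP$ is upward-closed. A downward-closed subset of the cylinder whose relative boundary equals $H$ must coincide with the lower half-space $\{\omega\cdot x<0\}$ (recall $\omega_n>0$), and likewise $\LLP=\{\omega\cdot x>0\}$; both are nonempty in every cylinder centered at the origin, which gives~\eqref{ECE}.

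It then remains to verify the trapping condition~\eqref{TRAP} and to invoke Theorem~\ref{T1.0}. On $H\cap {\rm C}(d,\overline K)$ one has $|x_n|=|\omega'\cdot x'|/\omega_n\le |\omega'|\,d/\omega_n$, so choosing $d\in(0,d_o]$ small enough that $|\omega'|\,d/\omega_n<h_o$ allows one to pick $K\in(|\omega'|\,d/\omega_n,h_o)$ and $\overline K\in(K,h_o]$ for which $(\partial\LLM\cup\partial\LLP)\cap {\rm C}(d,\overline K)=H\cap {\rm C}(d,\overline K)\subseteq\{|x_n|<K\}$, i.e.~\eqref{TRAP} holds (when $\omega'=0$ the hyperplane is horizontal and the choice is immediate). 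With~\eqref{ECE} and~\eqref{TRAP} in force, Theorem~\ref{T1.0} yields that $u$ is $1$D, at least when $2\le n\le8$. The main obstacle is precisely the first step of the reverse direction: making rigorous that the coincidence $\partial\LLM=\partial\LLP=H$, together with monotonicity, pins down $\LLM$ and $\LLP$ as the exact complementary half-spaces, the delicate point being to use downward/upward-closedness to rule out ``gaps'' near $H$ that would otherwise spoil either~\eqref{ECE} or the slanted inclusion in~\eqref{TRAP}.
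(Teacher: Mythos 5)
Your ``only if'' direction is essentially the paper's argument: write $u(x)=g(\omega\cdot x)$, use the heteroclinic behaviour of $g$ and the maximum principle to identify $\LLM=\{\omega\cdot x<0\}$ and $\LLP=\{\omega\cdot x>0\}$, and observe that \eqref{mono} forces $\omega_n>0$ so the common boundary is a non-vertical hyperplane through the origin. That part is fine. (As a side remark, the step you single out as the ``main obstacle'' in the converse --- pinning down $\LLM$ and $\LLP$ as exact half-spaces --- is not actually needed: since $\partial\LLM=\partial\LLP=H$ contains the origin and ${\rm C}(d,\overline K)$ is open, both $\LLM$ and $\LLP$ meet every such cylinder, which already gives \eqref{ECE}, and \eqref{TRAP} follows directly from the non-verticality of $H$ exactly as you compute.)

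The genuine gap is in the conclusion of the ``if'' direction. The corollary asserts that $u$ is $1$D with no restriction on the dimension, but after verifying \eqref{ECE} and \eqref{TRAP} you invoke item (iii) of Theorem~\ref{T1.0}, which only yields the conclusion for $2\le n\le 8$; your own phrasing ``at least when $2\le n\le 8$'' concedes this. The statement cannot be closed this way: the counterexample of \cite{delpinokowei} shows that monotone solutions satisfying \eqref{LI} need not be $1$D when $n\ge 9$, so one must exploit the extra information that the limit interface is exactly a hyperplane, not merely trapped in a slab. The paper's proof does this by using item (ii) of Theorem~\ref{T1.0} --- namely that $u$ is a local minimizer --- and then applying Corollary~7 of \cite{FV}, a flatness-implies-one-dimensionality result for minimizers which holds in every dimension. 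That appeal to minimality plus the flatness of $\partial\LLM$ is the missing ingredient in your argument.
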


The rest of the paper is organized as follows.
In Section~\ref{GA}
we prove the rigidity results based on the geometric behavior
of the limit set~$\LLM$ (namely, one after the other, Theorems~\ref{T04}, 
\ref{T1.0}, \ref{T04-BIS}, \ref{TTX},
\ref{T1.0vecchio} and~\ref{T1.0vecchio-nuova},
as well as Corollary~\ref{CORO}, which follows from Theorem~\ref{T1.0}).
Then, in Section~\ref{AG} 
we deal with the rigidity results coming from the geometric behavior
of the limit varifold~$V$ (namely Theorems~\ref{T1} and~\ref{T2}).
More precisely, in Subsection~\ref{GAG}
we describe the influence of the limit varifold
on the asymptotic behavior of the solution,
and  Theorems~\ref{T1} and \ref{T2} 
will be proved in Subsection~\ref{TGT}.

\section{Rigidity and symmetry from the limit level set}\label{GA}

Here we prove the rigidity results that rely on the geometric structure
of the limit sets~$\LLM$ and $\LLP$ (namely,
Theorems~\ref{T04}, \ref{T1.0}, \ref{T04-BIS} and~\ref{TTX}).

\begin{proof}[Proof of Theorem \ref{T04}.]
We prove only the first part of Theorem \ref{T04}
since the second part follows from the first one by replacing~$u=u(x)$
with~$ -u(x',-x_n) $.

We start by showing that, if~\eqref{ECE00} and~\eqref{TRAP00}
holds true, then the limit in~\eqref{TRAP01} holds true as well.
For this scope,
we let
\begin{equation}\label{P13Q}\begin{split}& Q_+:=\{|x'|< d\}\times \{ x_n\in [K,\overline K)\}\\
{\mbox{and }}& Q_-:=\{|x'| < d\}\times \{ x_n\in (-\overline K,-K]\}.\end{split}\end{equation}
Notice that
\begin{equation}\label{CP} {\rm C}(d,\overline K)=Q_-\cup Q_+\cup {\rm C}(d,K).\end{equation}
We claim that
\begin{equation}\label{ASS1}
{\mbox{either $\ Q_-\cap \LLM=\varnothing \ $ or
$\ Q_-\setminus \LLM=\varnothing \ $.}}
\end{equation}
To prove it, we notice that
if~\eqref{ASS1} were false,
the set ~$Q_-$ would have to contain a point
of the boundary of~$\LLM$.
This gives
a contradiction with~\eqref{TRAP00} and so we have
proved~\eqref{ASS1}.

Now we improve~\eqref{ASS1} by showing that
\begin{equation}\label{SASS}
Q_-\subseteq \LLM.
\end{equation}
Suppose not. Then~\eqref{ASS1} implies that~$Q_-\cap \LLM=\varnothing$,
hence for any~$x\in Q_-$ we have that~$
u_\varepsilon(x)\not\to-1$ as~$\eps\to0^+$. Then, fix any point~$y\in {\rm C}(d,\overline K)$
and recall~\eqref{CP} to find a point~$\tilde y\in Q_-$ such that~$\tilde y'=y'$
and~$\tilde y_n\le y_n$. {F}rom~\eqref{mono}, we know that~$\partial_{x_n}u_\eps>0$.
By collecting these pieces of information, we see that
$$ u_\eps(y)\ge u_\eps(\tilde y)\not\to-1$$
as~$\eps\to0^+$. In particular we have that~$u_\eps(y)\not\to-1$, that is~$y\not\in \LLM$.
Since this is valid for any~$y\in {\rm C}(d,\overline K)$, we have shown that~${\rm C}(d,\overline K)
\cap \LLM=\varnothing$. This is in contradiction with~\eqref{ECE00}
and so the proof of~\eqref{SASS} is complete.

Now, we set~$\vartheta:=(K+\overline K)/2$ and~$P:=(0,\dots,0,-\vartheta)$.
We have that~$P\in Q_-$, thus,
by~\eqref{SASS}, we conclude that~$P\in \LLM$ and so
$$ -1=\lim_{\eps\to0^+}u_\eps(P)=
\lim_{\eps\to0^+}u \left(0,\dots,0,-\frac{\vartheta}{\eps}\right)=\underline u(0).$$
This and the Maximum Principle implies that~$\underline u$
is constantly equal to~$-1$, which establishes~\eqref{TRAP01}.

Then, if additionally~$2\le n\le4$, using~\eqref{TRAP01}
and Theorem~1.2 of~\cite{Tran}, we obtain that~$u$ is~$1$D.
\end{proof}

\begin{proof}[Proof of Theorem \ref{T1.0}.]
We remark that~\eqref{ECE}
implies both~\eqref{ECE00} and \eqref{ECE00-bis}. Moreover, condition~\eqref{TRAP}
implies both~\eqref{TRAP00}
and~\eqref{TRAP00-bis}. Accordingly, we can use Theorem~\ref{T04}
and obtain claim~(i) of Theorem \ref{T1.0}.
Then, claims~(ii) and~(iii) follow from
Theorem~1.3 of~\cite{Tran}.\end{proof}

\begin{proof}[Proof of Corollary \ref{CORO}.]
Suppose that~$u$ is~$1$D. Then there exist~$\omega\in {\rm S}^{n-1}$
and a function of one variable~$u_o$ such that~$u(x)=u_o(\omega\cdot x)$
for every~$x\in\R^n$. By solving the ODE satisfied by~$u_o$ we obtain that
$$ \lim_{t\to\pm\infty}u_o(t)=\pm1.$$
We claim that
\begin{equation}\label{stra}
\LLM=\{\omega\cdot x<0\} \ {\mbox{ and }} \
\LLP=\{\omega\cdot x>0\}.
\end{equation}
Indeed, 
$$\lim_{\eps\to0^+}u_\eps(x)=
\lim_{\eps\to0^+} u\left(\frac{x}{\eps}\right)=
\lim_{\eps\to0^+}u_o\left(
\frac{\omega\cdot x}{\eps}\right)=\left\{
\begin{matrix}
1 & {\mbox{ if }}\omega\cdot x>0,\\
u_o(0) & {\mbox{ if }}\omega\cdot x=0,\\
-1 & {\mbox{ if }}\omega\cdot x<0.
\end{matrix}
\right. $$
This and~\eqref{EMU}
prove~\eqref{stra}. {F}rom~\eqref{stra},
it follows that~$\partial \LLM=\partial\LLP=\{\omega\cdot x=0\}$,
which is a non-vertical hyperplane containing the origin.\medskip

Viceversa, let us now suppose that~$\partial \LLM=\partial\LLP$ is
a non-vertical hyperplane containing the origin.
Then conditions~\eqref{ECE} and~\eqref{TRAP} are satisfied and we
infer from Theorem~\ref{T1.0} that~$u$ is a minimizer. In particular,
we can use Corollary~7 in~\cite{FV} and obtain that~$u$ is~$1$D,
without any restriction on the dimension of the ambient space.
\end{proof}

\begin{proof}[Proof of Theorem \ref{T04-BIS}.]
The proof is a modification of the one Theorem \ref{T04},
according to these lines. First, one replaces~$\LLM$ with~$\EEEM$
in~\eqref{ASS1}, i.e. instead of~\eqref{ASS1} one proves that
\begin{equation}\label{ASS1-NUOVA}
{\mbox{either $\ Q_-\cap \EEEM=\varnothing \ $ or
$\ Q_-\setminus \EEEM=\varnothing \ $.}}
\end{equation}
This follows easily from~\eqref{TRAP00-BIS}. Then, one replaces~$\LLM$
with~$\EEEM$ in \eqref{SASS}, i.e. one shows that
\begin{equation}\label{SASS-NUOVA}
Q_-\subseteq \EEEM.
\end{equation}
The proof is similar to the one of~\eqref{SASS}, but it makes use
of~\eqref{ZERO} and~\eqref{ECE00-BIS}. Namely, if~\eqref{SASS-NUOVA}
were false, we would deduce from~\eqref{ASS1-NUOVA} 
that~$Q_-\cap\EEEM=\varnothing$.
In particular, from \eqref{ZERO}, 
we obtain that
for almost every $x\in Q_-$
we have that~$u_\eps(x)\not\to-1$. Thus, \eqref{mono} implies $u_\eps(y)\not\to-1$, for almost every $y\in {\rm C}(d,\overline K)$. Hence $|\EEEM\cap {\rm C}(d,\overline K)|=0$,
which is in contradiction with~\eqref{ECE00-BIS}.

Having established~\eqref{SASS-NUOVA},
we use it to see that~$P:=(0,\dots,0,-\vartheta)\in \EEEM
\subseteq \LLM$, where~$\vartheta:=(K+\overline K)/2$,
and so
$$ -1=\lim_{\eps\to0^+}u_\eps(P)=\underline u(0),$$
which, by Maximum Principle implies that~$\underline u$
is constantly equal to~$-1$, which establishes~\eqref{8fd67sgh}.
This and Theorem~1.2 of~\cite{Tran}, we also obtain that~$u$ is~$1$D
if~$2\le n\le4$.

This proves the first statement in Theorem \ref{T04-BIS}.
The second follows from the first, applied to the function~$ -u(x',-x_n) $.
The third statement is then a combination of the first two ones:
more precisely,
claim~(i) in the last statement of
Theorem \ref{T04-BIS} follows from the previous two statements,
and then this implies claims~(ii) and~(iii), by
Theorem~1.3 of~\cite{Tran}.
\end{proof}

\begin{proof}[Proof of Theorem \ref{TTX}.]
Let us suppose that~$\{u=0\}$ is confined below
a complete graph (the case in which it is confined above being
analogous). 
We notice that
\begin{equation}\label{00}
\{u=0\}\ne\varnothing.
\end{equation}
To prove it, we argue by contradiction
and we suppose that~$u>0$ in the whole of~$\R^n$
(the case in which~$u<0$ is analogous). Then, by Theorem~2.1
in~\cite{FAR}, we have that~$u$ is identically equal to~$1$.
This is in contradiction with~\eqref{mono}
and so~\eqref{00} is proved.

{F}rom~\eqref{00} and~\eqref{mono} we obtain that
\begin{equation*}
\{ u>0\}\cap \{x_n>\gamma(x')\}\ne\varnothing.
\end{equation*}
Using this,
\eqref{02} 
and the fact that the set $\{x_n>\gamma(x')\}$ is open and path-connected,
we conclude that
\begin{equation*}
{\mbox{if $\quad x_n>\gamma(x')\quad$ then~$\quad u(x',x_n)>0$.}}
\end{equation*}
{F}rom this, we deduce that~$\overline u > 0$.
So we can apply once more
Theorem~2.1
in~\cite{FAR} (this time to the solution~$\overline u$ in~$\R^{n-1}$)
and deduce that~$\overline{u}$
is identical to~$1$. This establishes~\eqref{Tx1}.

Now assume in addition that~$2\le n\le 4$. Then~\eqref{Tx1}
and Theorem~1.2 of~\cite{Tran} give that~$u$ is~$1$D.
\end{proof}

\subsection{Geometric analysis of level sets}\label{S1}

Here we collect some auxiliary geometric results of somehow
elementary nature that will be useful for the proof of
Theorems~\ref{T1.0vecchio} and~\ref{T1.0vecchio-nuova}.
For this, we use the following additional notation.
Given~$x'\in\R^{n-1}$, we denote by~${\rm r}(x')$ the vertical
straight line through~$x'$, i.e.
$$ {\rm r}(x'):=\{ (x',t),\ t\in\R\}.$$
Also, for any $p\in\R^{n-1}$ and~$r>0$,
we denote by~$B^{n-1}_r(p)$ the $(n-1)$-dimensional ball 
of radius~$r$ centered at~$p$ and~$B_r^{n-1}:=B_r^{n-1}(0)$.
Also, when no confusion arises, we implicitly identify~$\R^{n-1}\times\{0\}$
and~$\R^{n-1}$ (i.e. points of the form~$(x',0)\in\R^{n-1}\times\{0\}$
and points of the form~$x'\in\R^{n-1}$).

\begin{lemma}\label{n1}
Let~$d>0$, $\Psi\subset\R^n$ and~$Z$ be the projection of~$\Psi$ onto~$\R^{n-1}$, that is
\begin{equation}\label{76fdjkw7119-Z1}
Z:= \{ x'\in\overline{ B^{n-1}_d}
{\mbox{ s.t. }} {\rm r}(x')\cap \Psi\ne\varnothing  \}.
\end{equation}
Suppose that
\begin{equation}\label{76fdjkw7119}
{\mbox{$\Psi\cap \{ x\in\R^n {\mbox{ s.t. }}
|x'|\le d\}$ is compact.}}
\end{equation}
Then~$Z$ is closed.
\end{lemma}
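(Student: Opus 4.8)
The plan is to observe that $Z$ is precisely the image of a compact set under the orthogonal projection $\pi:\R^n\to\R^{n-1}$, $\pi(x',x_n):=x'$, and that such an image is compact and hence closed. I set
$$ K:=\Psi\cap\{x\in\R^n\ {\mbox{s.t.}}\ |x'|\le d\},$$
which is compact by the hypothesis~\eqref{76fdjkw7119}. The first step is to check that
$$ Z=\pi(K).$$
Indeed, if $x'\in Z$ then $|x'|\le d$ and there is some $t\in\R$ with $(x',t)\in\Psi$; since $|x'|\le d$, the point $(x',t)$ lies in $K$, so $x'=\pi(x',t)\in\pi(K)$. Conversely, any element of $\pi(K)$ is of the form $x'=\pi(x',t)$ with $(x',t)\in K\subseteq\Psi$ and $|x'|\le d$, whence ${\rm r}(x')\cap\Psi\ne\varnothing$ and $x'\in\overline{B^{n-1}_d}$, i.e. $x'\in Z$.

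Having identified $Z=\pi(K)$, I would conclude by a standard compactness argument. Either one invokes directly that the continuous image $\pi(K)$ of the compact set $K$ is compact, hence closed in $\R^{n-1}$; or, if one prefers a self-contained sequential proof, one takes a sequence $x'_k\in Z$ converging to some $x'_\infty\in\R^{n-1}$ and, for each $k$, chooses $t_k$ with $(x'_k,t_k)\in K$. By compactness of $K$ a subsequence $(x'_{k_j},t_{k_j})$ converges to a limit $(x'_\infty,t_\infty)\in K\subseteq\Psi$, so that ${\rm r}(x'_\infty)\cap\Psi\ne\varnothing$; since also $|x'_\infty|\le d$ as a limit of points with $|x'_k|\le d$, we get $x'_\infty\in Z$, proving that $Z$ is (sequentially) closed.

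There is essentially no hard step here, but it is worth stressing where the hypothesis~\eqref{76fdjkw7119} is used: it is exactly the ingredient that controls the vertical coordinate $t_k$ and lets one extract a convergent subsequence of the lifts $(x'_k,t_k)$. Compactness of $\Psi\cap\{|x'|\le d\}$, rather than mere closedness, is genuinely needed, since the projection of a closed but unbounded set need not be closed (for instance, the projection onto the first coordinate of the hyperbola $\{x_1x_n=1\}\subset\R^2$ is $\R\setminus\{0\}$). Thus the whole content of the lemma is the passage from the compactness of $\Psi$ inside the slab $\{|x'|\le d\}$ to the closedness of its shadow $Z$.
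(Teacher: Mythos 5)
Your proof is correct and, in its sequential form, is essentially identical to the paper's argument: take a convergent sequence in $Z$, lift it to $\Psi\cap\{|x'|\le d\}$, extract a convergent subsequence by compactness, and pass to the limit. The preliminary identification $Z=\pi\bigl(\Psi\cap\{|x'|\le d\}\bigr)$ and the remark that compactness (not mere closedness) is what is really needed are nice clarifications, but they do not change the substance of the argument.
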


\begin{proof} 
The geometric situation of
Lemma~\ref{n1} 
is described in Figure~2.
\bigskip

\begin{center} 
\includegraphics[height=2in]{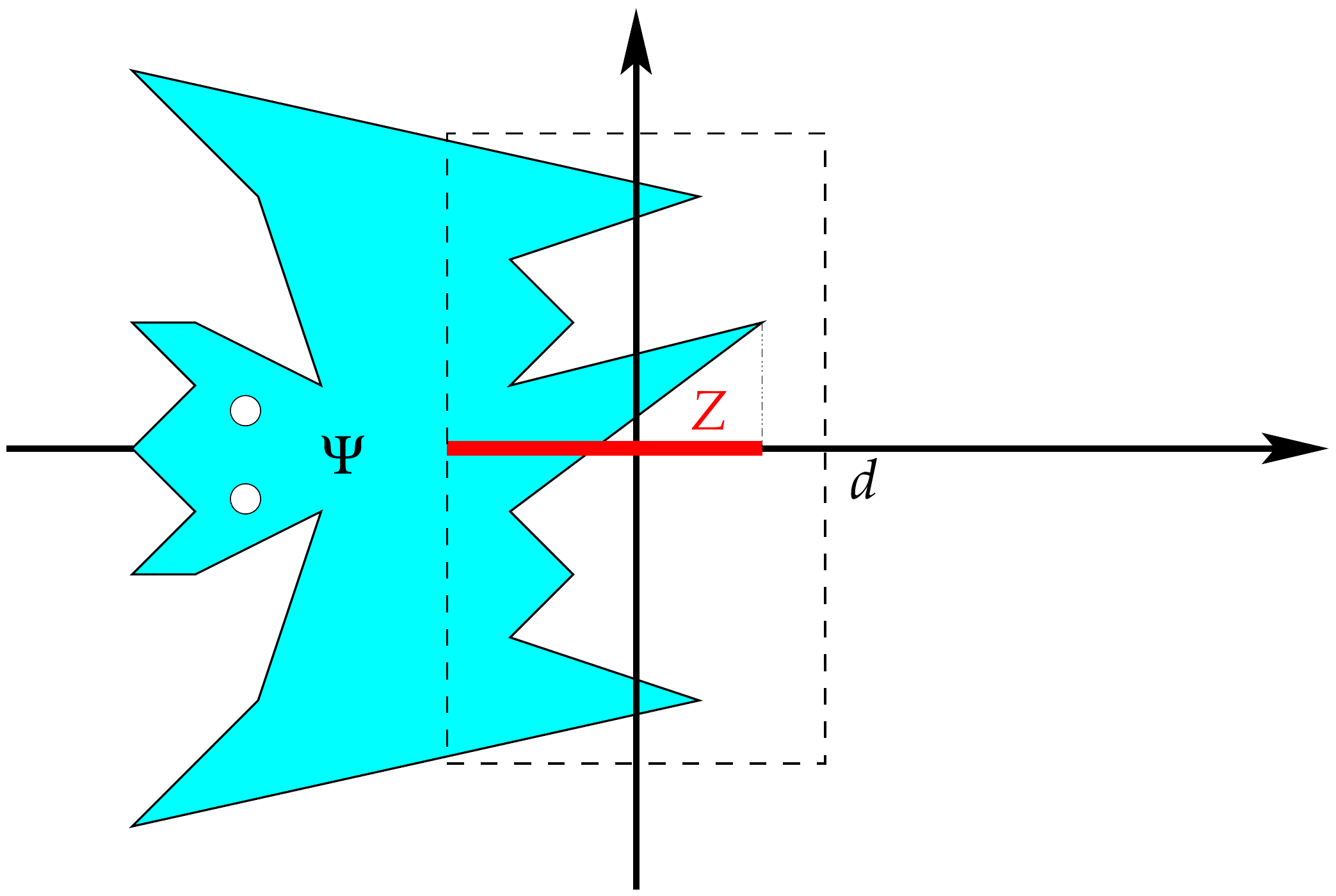}
\end{center}
\nopagebreak\centerline{\footnotesize\em
Figure~2. The sets of Lemma~\ref{n1}.}
\bigskip

The proof of Lemma~\ref{n1} goes as follows.
Let~$x'_k\in Z$ be a sequence approaching some~$x'_\star
\in\R^{n-1}$. Then,
there exists~$t_k$ such that~$(x'_k,t_k)\in {\rm r}(x'_k)\cap \Psi$.
By~\eqref{76fdjkw7119}, we can take a converging subsequence,
that is we can write
$$ \lim_{j\rightarrow+\infty} (x'_{k_j},t_{k_j})=
(x'_\star,t_\star)\in
\Psi\cap \{ x\in\R^n {\mbox{ s.t. }}
|x'|\le d\}
..$$
That is~$(x'_\star,t_\star)\in
{\rm r}(x'_\star)\cap \Psi$ and so~$
{\rm r}(x'_\star)\cap \Psi\ne\varnothing $. This
implies that~$x'_\star\in Z$, and so~$Z$ is closed.
\end{proof}

\begin{corollary}\label{3456789011122}
Let~$c\in\R$, $d>0$, $\overline K>K>0$, $\eta:=(\overline K-K)/4$, $v\in C^1(\R^n)$,
\begin{equation}\label{A8}\Gamma\subseteq{\rm C}(d,K)\end{equation}
and
\begin{equation}\label{7bis}
Z_\star := \Big\{ x'\in \overline{B^{n-1}_d}
{\mbox{ s.t. }} {\rm r}(x')\cap \{ v=c\}\ne\varnothing \Big\}.
\end{equation}
Suppose that
\begin{equation}\label{noem}
\{v=c\}\cap {\rm C}(d,\overline K)\ne\varnothing ,\end{equation}
that
\begin{equation}\label{d7ffg902093211}
\{ v=c\}\cap {\rm C}(d,\overline K)\subseteq \Gamma_\eta:=\bigcup_{p\in\Gamma} B_\eta(p)\end{equation}
and that
\begin{equation}\label{7dfs9709e993321a}
{\mbox{$\partial_n v(x)> 0$ for any $x\in\R^n$.}}\end{equation}
Then,~$Z_\star= \overline{B^{n-1}_d}$.
\end{corollary}

\begin{proof}
The geometric situation of
Corollary~\ref{3456789011122}
is described in Figure~3.
\bigskip

\begin{center} 
\includegraphics[height=2in]{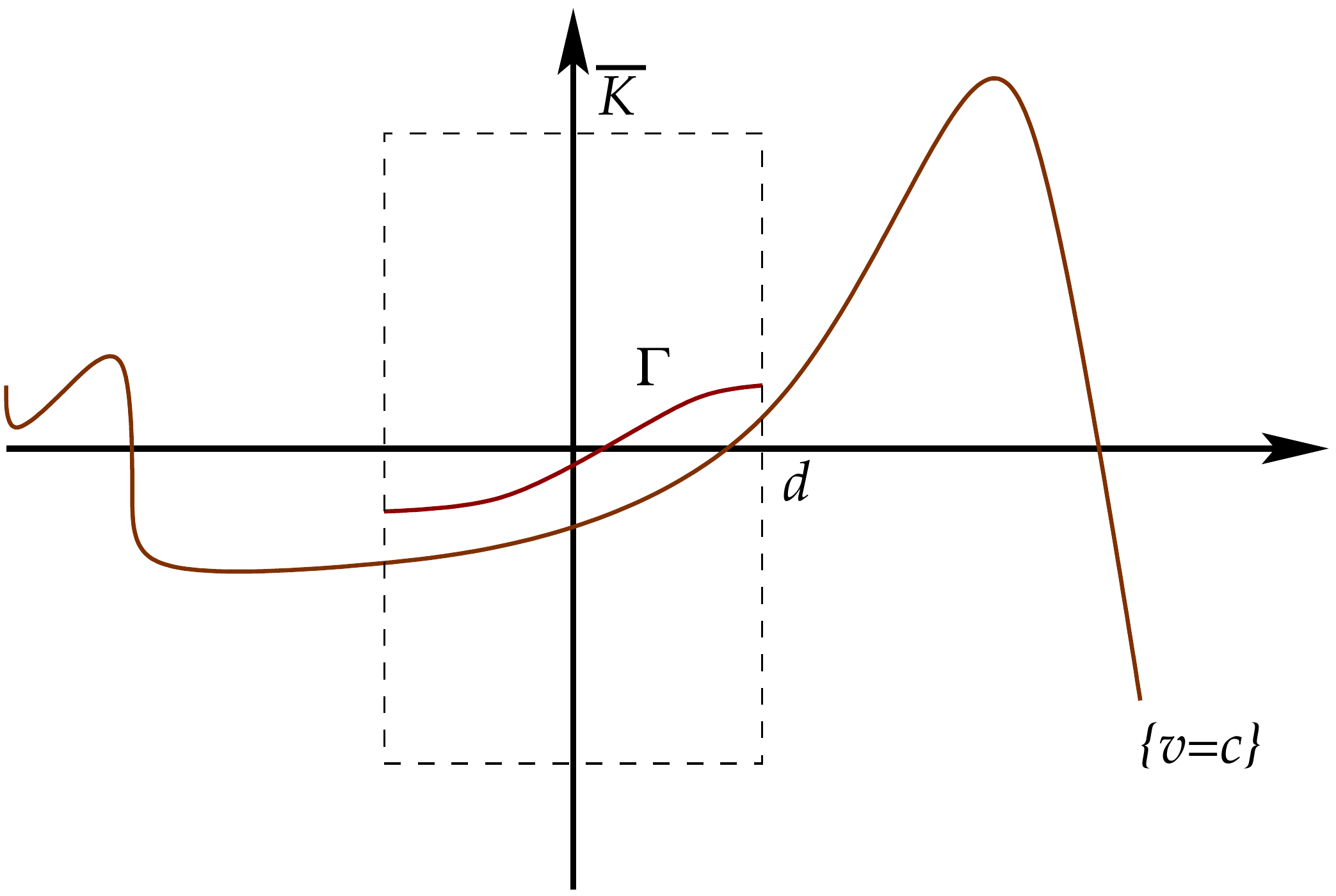}
\end{center}
\nopagebreak\centerline{\footnotesize\em
Figure~3. The sets in Corollary~\ref{3456789011122}.}
\bigskip

Its proof goes as follows.
We set~$\Psi:= \{ v=c\}\cap \{|x_n|\le \overline K\}$ and we observe that~$\Psi$
is closed since~$v$ is continuous.
We take $Z:=
\{ x'\in\overline{ B^{n-1}_d}
{\mbox{ s.t. }} {\rm r}(x')\cap \Psi\ne\varnothing  \}$.
Notice that this definition is coherent with~\eqref{76fdjkw7119-Z1}
and that
\begin{equation}\label{Asadf43123}
Z\ne\varnothing ,\end{equation}
thanks to~\eqref{noem}.

Accordingly, by Lemma~\ref{n1},
we have that
\begin{equation}\label{closed}
{\mbox{$Z$ is closed.}}\end{equation}
On the other hand, 
by~\eqref{A8}
and~\eqref{d7ffg902093211},
\begin{equation}\label{psi}\begin{split}
& \Psi\cap \{|x'| \le d\}
=\{v=c\}\cap {\rm C}(d,\overline K)
\\ &\qquad\subseteq \Gamma_\eta\cap {\rm C}(d,\overline K)
\subseteq 
{\rm C}(d,K+\eta).\end{split}\end{equation}
Now let~$p'\in Z$. Then, there exists $t\in\R$
such that $(p',t)\in \Psi$, hence~$v(p',t)=c$ and, by~\eqref{psi},
we have that~$|t|\le K+\eta$. Thus, by~\eqref{7dfs9709e993321a}
and the Implicit Function Theorem,
there exist~$\delta_1>0$ and~$\delta_2>0$
such that for any~$q'\in B_{\delta_1}^{n-1}(p')
\cap \overline{B^{n-1}_d}$
there exists~$t(q') \in (t-\delta_2,t+\delta_2)$ for which~$v(q',t(q'))=c$.
By possibly taking~$\delta_1$ smaller, we may and
do suppose that~$\delta_2<\eta$, therefore~$|t(q')|\le |t|+\delta_2
<K+2\eta\le\overline K$, which gives that~$q'\in Z$.
This says that
\begin{equation}\label{open}
{\mbox{$Z$ is also open in~$\overline{B^{n-1}_d}$.
}}\end{equation}
Accordingly, 
by~\eqref{closed}, \eqref{open} and~\eqref{Asadf43123}, we have 
that~$Z=\overline{B^{n-1}_d}$.
As a consequence,
using~\eqref{76fdjkw7119-Z1} and~\eqref{7bis}, we obtain
\begin{eqnarray*}
&& \overline{B^{n-1}_d} = Z =
\{ x'\in\overline{ B^{n-1}_d}
{\mbox{ s.t. }} {\rm r}(x')\cap \Psi\ne\varnothing \}
\\ &&\qquad\subseteq
\{ x'\in\overline{ B^{n-1}_d}
{\mbox{ s.t. }} {\rm r}(x')\cap \{v=c\}\ne\varnothing \}
=Z_\star\subseteq \overline{B^{n-1}_d},
\end{eqnarray*}
hence
we have proved the desired result.
\end{proof}

With this, we can now prove Theorems~\ref{T1.0vecchio}
and~\ref{T1.0vecchio-nuova}:

\begin{proof}[Proof of Theorem \ref{T1.0vecchio}.]
We will show that, given~$c\in(-1,1)$ as in the statement of Theorem~\ref{T1.0vecchio},
\begin{equation}\label{topvecchio}\begin{split}
&{\mbox{the level set~$\{u=c\}$ is a complete graph, i.e.}}
\\&{\mbox{for any fixed~$x'_o\in\R^{n-1}$, we have that~$r(x'_o)\cap\{u=c\}
\ne\varnothing$.}}
\end{split}\end{equation}
For this, we take~$c_-$, $c_+\in\R$ such that
$$ -1<c_-<c<c_+<1.$$
{F}rom~\eqref{lanuova} we know that 
there exist~$p\in \LLM
\cap {\rm C}(d,\overline{K})$
and~$q\in\LLP\cap
{\rm C}(d,\overline{K})$ such that~$u_\eps(p)$
approaches~$1$ and~$u_\eps(q)$ approaches~$-1$
as~$\eps\rightarrow 0^+$. In particular, if~$\eps$
is suitably small,~$u_\eps(p)>c_+$ and~$u_\eps(q)<c_-$.
Since~${\rm C}(d,\overline{K})$ is convex and~$u_\eps$ continuous,
this gives that there exists~$x(\eps)\in {\rm C}(d,\overline{K})$
such that~$u_\eps(x(\eps))=c$, that is
\begin{equation*}
\{u_\eps=c\} \cap {\rm C}(d,\overline{K})\ne\varnothing.
\end{equation*}
We remark that this implies~\eqref{noem} with~$v:=u_\eps$,
while, with this setting, condition~~\eqref{7dfs9709e993321a} comes from~\eqref{mono}.

Furthermore, if we take~$\Gamma:=(\partial \EEE)\cap {\rm C}(d,\overline{K})$,
we have that~\eqref{TRAPvecchio} implies~\eqref{A8},
and that~\eqref{aCCO} implies~\eqref{d7ffg902093211}.
Consequently, we can apply
Corollary~\ref{3456789011122} 
with~$v:=u_\eps$ and~$\Gamma:=(\partial \EEE)\cap {\rm C}(d,\overline{K})$,
and we conclude that, for small~$\eps$,
$$\eps x'_o\in \overline{B^{n-1}_d}=
\Big\{ x'\in \overline{B^{n-1}_d}
{\mbox{ s.t. }} {\rm r}(x')\cap \{ u_\eps=c\}\ne\varnothing \Big\}. $$
That is, ${\rm r}(\eps x'_o)\cap \{ u_\eps=c\}\ne\varnothing $.
This, by the definition of~$u_\eps$, proves~\eqref{topvecchio} and claim (i) of Theorem \ref{T1.0vecchio}. 

Then, claims~(ii)-(iv) of Theorem \ref{T1.0vecchio}
follow from claim (i)
and Theorem~1.3 of~\cite{Tran}.
\end{proof}

\begin{proof}[Proof of Theorem \ref{T1.0vecchio-nuova}.]

Both~$\EEE\cap 
{\rm C}(d,\overline{K})$
and~$({\mathcal{C}}\EEE)\cap {\rm C}(d,\overline{K})$ have positive measure by \eqref {ECEvecchio}, hence
using~\eqref{COC} and the a.e. convergence of~$u_\eps$,
we conclude that both~$\LLM\cap {\rm C}(d,\overline{K})$
and~$\LLP\cap {\rm C}(d,\overline{K})$ have positive measure. In particular, they are non-empty and~\eqref{lanuova}
is satisfied. With this, Theorem \ref{T1.0vecchio-nuova} is now a direct consequence
of Theorem~\ref{T1.0vecchio}.
\end{proof}

\section{Rigidity and symmetry from the limit varifold}\label{AG}

In this section we investigate the structure of the limit varifold, with the aim of
proving Theorems~\ref{T1} and~\ref{T2}.

\subsection{The limit varifold}\label{GAG}

Here we relate the structure of the limit varifold with the asymptotic
properties of the solutions. 

\begin{lemma}\label{Lemma 1}
Let~$u$ be a solution of~\eqref{PDE}
satisfying~\eqref{mono} and~\eqref{e bound}
and let~$V$ be the associated limit varifold.
Assume that there exists~$\overline x=(0,\dots,0,\overline x_n)$
with~$\overline x_n \in(0,h_o)$ that does not belong to~$V$.
Then
\begin{equation}\label{9djhrrfchg00} 
\lim_{x_n\rightarrow+\infty} u(x',x_n)=1.\end{equation}
Similarly, 
if there exists~$\underline x=(0,\dots,0,\underline x_n)$
with~$\underline x_n \in(-h_o,0)$ that does not belong to~$V$, then
$$ \lim_{x_n\rightarrow-\infty} u(x',x_n)=-1.$$
\end{lemma}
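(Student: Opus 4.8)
The plan is to read off the value of the upper asymptotic profile $\underline u$ at the origin by localizing the rescaled solutions $u_\eps$ near the point $\overline x$, where the limit interface is absent, and then to propagate that value to all of $\R^{n-1}$ by the maximum principle. I only prove the first assertion: the second follows by applying it to $-u(x',-x_n)$, which (as $W'$ is odd) solves the same equation, satisfies~\eqref{mono}, and whose limit varifold is the reflection of $V$ across $\{x_n=0\}$; under this reflection the hypothesis on $\underline x$ becomes exactly the hypothesis on $\overline x$.

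Since the geometric support $V$ is closed and $\overline x=(0,\dots,0,\overline x_n)$ lies in the interior of ${\rm C}(d_o,h_o)$ (because $0<\overline x_n<h_o$ forces $|\overline x'|=0<d_o$ and $|\overline x_n|<h_o$), I may fix $\rho>0$ so small that the closed ball $\overline{B_\rho(\overline x)}$ is a compact, connected subset of ${\rm C}(d_o,h_o)\setminus V$. By the first convergence property of the limit varifold, $u_\eps$ then converges uniformly on $\overline{B_\rho(\overline x)}$ to a single constant, which is either $+1$ or $-1$; in particular $u_\eps(\overline x)$ tends to $+1$ or to $-1$.

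To decide the sign I use~\eqref{mono} and~\eqref{EMU}. Since $\overline x_n/\eps>0$ and $t\mapsto u(0,\dots,0,t)$ is strictly increasing, we have $u_\eps(\overline x)=u(0,\dots,0,\overline x_n/\eps)>u(0)$, and $u(0)\in(-1,1)$ is a fixed constant independent of $\eps$. Hence $u_\eps(\overline x)$ stays bounded away from $-1$ and cannot tend to $-1$, so the limit above is $+1$. Letting $\eps\to0^+$, and noting that $\overline x_n/\eps\to+\infty$ while $t\mapsto u(0,\dots,0,t)$ is increasing and bounded above by $1$, we conclude that $\underline u(0)=\lim_{x_n\to+\infty}u(0,\dots,0,x_n)=1$.

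It remains to upgrade $\underline u(0)=1$ to $\underline u\equiv1$. By~\eqref{mono} the vertical translates $u(\,\cdot\,,\,\cdot+t)$ converge, as $t\to+\infty$, in $C^2_{\rm loc}$ to $\underline u$, which is therefore a bounded solution of $\Delta\underline u=W'(\underline u)$ in $\R^{n-1}$ with $\underline u\le1$. Set $v:=1-\underline u\ge0$; since $W'(1)=0$ one can write $\Delta v=-W'(1-v)=\bar c(x)\,v$ for a bounded coefficient $\bar c$, so with $C:=\sup|\bar c|\ge0$ one gets $\Delta v-Cv=(\bar c-C)v\le0$, with $v\ge0$ and $v(0)=0$. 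The strong maximum principle then yields $v\equiv0$, that is $\underline u\equiv1$, which is~\eqref{9djhrrfchg00}. The one genuinely delicate point is the very first step: extracting from the Hutchinson--Tonegawa structure that on the connected set $\overline{B_\rho(\overline x)}$ disjoint from $V$ the limit is a single phase $\pm1$ rather than a mixture; once this is granted, the sign is forced by monotonicity and~\eqref{EMU}, and the maximum-principle propagation of $\underline u(0)=1$ to $\underline u\equiv1$ is standard.
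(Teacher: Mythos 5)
Your proof is correct and follows essentially the same route as the paper: fix a small ball around $\overline x$ disjoint from $V$, invoke the Hutchinson--Tonegawa uniform convergence of $u_\eps$ to $\pm1$ on connected compact subsets of ${\rm C}(d_o,h_o)\setminus V$, rule out the value $-1$ via monotonicity and~\eqref{EMU}, and then propagate the value $1$ of the asymptotic profile at the origin to all of $\R^{n-1}$ by the strong maximum principle. The only (immaterial) differences are that you exclude the $-1$ alternative at the level of $u_\eps(\overline x)>u(0)$ rather than by noting that a profile identically $-1$ would force $u\equiv-1$, and that you write out the maximum-principle step that the paper leaves implicit.
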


\begin{proof} We prove the first claim since the second one is alike.
Since~$V$ is closed
in~${\rm C}(d_o,h_o)$, the distance from~$\overline x$ to~$V$ is
strictly positive, therefore there exists~$\delta>0$ such that~$B_\delta(\overline x)\subseteq
{\rm C}(d_o,h_o)$ and
\begin{equation}\label{icvddffg12} B_\delta(\overline x)\cap V_\delta=\varnothing,\end{equation}
where the notation in~\eqref{6bis} has been used.
Using the uniform convergence of $u_\eps$ on each connected compact subset of 
$C(d_o, h_o) \setminus  V$ 
we immediately infer that either
$$ 0=\lim_{\varepsilon\to0^+} |u_\varepsilon(\overline x)-1|=
\lim_{\varepsilon\to0^+} \left| u\left( 0,\dots,0,\frac{\overline x_n}{\varepsilon}
\right)-1\right|=|\overline u(0)-1|$$
or
$$ 0=\lim_{\varepsilon\to0^+} |u_\varepsilon(\overline x)+1|=
\lim_{\varepsilon\to0^+} \left| u\left( 0,\dots,0,\frac{\overline x_n}{\varepsilon}
\right)+1\right|=|\overline u(0)+1|,$$
where the notation in~\eqref{profiles} and the assumption that~$\overline x_n>0$
have been used. Hence either~$\overline u(0)=1$ or~$\overline u(0)=-1$.
This and the Maximum Principle implies that~$\overline u$
is constantly equal to either~$1$ or~$-1$.

On the other hand, $\overline u$ cannot be constantly equal to~$-1$
(otherwise, by~\eqref{mono}, also~$u$ would be
constantly equal to~$-1$, thus contradicting~\eqref{mono} itself).
This says that~$\overline u$
is constantly equal to~$1$, which is~\eqref{9djhrrfchg00}.
\end{proof}

\subsection{Proof of the symmetry results from the behavior of the
limit varifold}\label{TGT}

Now we are ready to complete the proof of Theorems~\ref{T1} and~\ref{T2}.

\begin{proof}[Proof of Theorem \ref{T1}.]
By Lemma~\ref{Lemma 1}, we have that
$$ \lim_{x_n\rightarrow+\infty} u(x',x_n)=1 \
{\mbox{ and }}
\lim_{x_n\rightarrow-\infty} u(x',x_n)=-1,$$
which is~(i).
Then, claims~(ii) and~(iii) follow from~\cite{S}.
\end{proof}

\begin{proof}[Proof of Theorem \ref{T2}.]
Let~$\tilde x=(0,\dots,\tilde x_n)\in {r}_\star\setminus V$. We
observe that $ \tilde x_n \neq 0$,
since, by~\eqref{EMU} and~\eqref{LU0}, we know that~$ 0 \in V$.  


Combining this with Lemma~\ref{Lemma 1}, we see that 
$$ {\mbox{either }} \; 
\lim_{x_n\to+\infty} u(x',x_n)=1 \; 
{\mbox{ or }} \; \lim_{x_n\to-\infty} u(x',x_n)=-1 .$$ 
This and Theorem~1.2 of~\cite{Tran} give that~$u$ is~$1$D.  
\end{proof}


\providecommand{\bysame}{\leavevmode\hbox to3em{\hrulefill}\thinspace}
\providecommand{\MR}{\relax\ifhmode\unskip\space\fi MR }
\providecommand{\MRhref}[2]{%
  \href{http://www.ams.org/mathscinet-getitem?mr=#1}{#2}
}
\providecommand{\href}[2]{#2}

\end{document}